\definecolor{dark}{rgb}{0.0,0.0,0.0}
\newcommand{\real}[1]{\mathbb{R}^{#1}{}}
\newcommand{\bmat}[1]{\begin{bmatrix}#1\end{bmatrix}}
\newcommand{\transpose}{^T}
\newcommand{\inverse}{^{-1}}
\newcommand{\df}{\dot{f}}
\newcommand{\ddf}{\ddot{f}}
\newcommand{\myparagraph}[1]{\textbf{\emph{#1}}.}
\newcommand{\Sset}{\mathcal{S}}
\newcommand{\ie}{\textsl{i.e.}{}}
\newcommand{\eg}{\textsl{e.g.}{}}
\newtheorem{theorem}{Theorem}[section]
\newtheorem{proposition}[theorem]{Proposition}
\newtheorem{lemma}[theorem]{Lemma}
\newtheorem{problem}[theorem]{Problem}
\newtheorem{assumption}[theorem]{Assumption}
\DeclareMathOperator*{\trace}{tr}
\DeclareMathOperator{\sign}{sign}
\newcommand{\D}{\mathrm{D}}
\DeclareMathOperator{\grad}{grad}
\newcommand{\gradx}{\grad_x\!}
\newcommand{\Dxgradx}{\D_x\!\grad_x\!}
\newcommand{\abs}[1]{\lvert#1\rvert}
\newcommand{\norm}[1]{\lVert#1\rVert}
\newcommand{\defeq}{\doteq}
\newcommand{\de}{\mathrm{d}}
\newcommand{\dert}[1][]{\frac{\de #1}{\de t}}
\newcommand{\dtheta}{\dot{\theta}}
\newcommand{\tx}{\tilde{x}}
\newcommand{\dtx}{\dot{\tx}}
\newcommand{\betagi}{\beta_{gi}}
\newcommand{\tbetagi}{\tilde{\beta}_{gi}}
\newcommand{\dtbetagi}{\dot{\tilde{\beta}}_{gi}}
\newcommand{\betai}{\beta_i}
\newcommand{\tbetai}{\tilde{\beta}_i}
\newcommand{\dtbetai}{\dot{\tilde{\beta}}_i}
\newcommand{\ri}{d_i}
\newcommand{\rgi}{d_{gi}}
\newcommand{\rgmax}{d_{g,\textrm{max}}}
\newcommand{\tri}{\tilde{d}_i}
\newcommand{\dtri}{\dot{\tilde{d}}_i}
\newcommand{\ci}{c_i}
\newcommand{\ciinf}{c_{i,\infty}}
\newcommand{\tci}{\tilde{c}_i}
\newcommand{\dtci}{\dot{\tilde{c}}_i}
\newcommand{\varphii}{\varphi_i}
\newcommand{\tvarphi}{\tilde{\varphi}}
\newcommand{\tvarphii}{\tvarphi_i}
\newcommand{\dtvarphi}{\dot{\tilde{\varphi}}}
\newcommand{\dtvarphii}{\dtvarphi_i}
\newcommand{\numax}{\nu_{\textrm{max}}}
\newcommand{\omegamax}{\omega_{\textrm{max}}}
\newcommand{\Dbetagradx}{\D_{\betagi}\!\grad_x\!}
\definecolor{myorange}{RGB}{255,168,19}
\definecolor{mygreen}{RGB}{157,178,37}
\definecolor{myred}{RGB}{204,55,42}
\definecolor{myblue}{RGB}{3,119,201}
\tikzset{
  dim below/.style={to path={\pgfextra{
        \pgfinterruptpath
        \draw[>=latex,|->|] let 
        \p1=($(\tikztostart)!1em!-90:(\tikztotarget)$),
        \p2=($(\tikztotarget)!1em!90:(\tikztostart)$)
        in (\p1) -- (\p2) node[pos=.5,sloped,below]{#1};
        \endpgfinterruptpath
      }
    }
  }
}
\newcommand{\urltilde}{\kern -.15em\lower .7ex\hbox{\texttt{~}}\kern .04em}
\title{\LARGE \bf
  \ifthenelse{\boolean{isconference}}{
    An Optimization Approach to Bearing-Only Visual Homing\\with Applications to a 2-D Unicycle Model
  }{
    Technical report on Optimization-Based Bearing-Only Visual Homing with Applications to a 2-D Unicycle Model
  }
}
\author{Roberto Tron and Kostas Daniilidis
\thanks{The authors are with the Department of
Computer and Information Science, University of Pennsylvania, Philadelphia, PA 19104,
{\tt\small\{tron,kostas\}@seas.upenn.edu}. This work was supported by grants NSF-IIP-0742304, NSF-OIA-1028009, ARL MAST CTA W911NF-08-2-0004, ARL Robotics CTA W911NF-10-2-0016, NSF-DGE-0966142, and NSF-IIS-1317788.}%
}
\begin{document}

\maketitle
\thispagestyle{empty}
\pagestyle{empty}

\begin{abstract}
We consider the problem of bearing-based visual homing: Given a mobile robot which can measure bearing directions with respect to known landmarks, the goal is to guide the robot toward a desired ``home'' location. We propose a control law based on the gradient field of a Lyapunov function, and give sufficient conditions for global convergence. We show that the well-known Average Landmark Vector method (for which no convergence proof was known) can be obtained as a particular case of our framework. We then derive a sliding mode control law for a unicycle model which follows this gradient field. Both controllers do not depend on range information. Finally, we also show how our framework can be used to characterize the sensitivity of a home location with respect to noise in the specified bearings. \ifthenelse{\boolean{isconference}}{}{This is an extended version of the conference paper \cite{Tron:ICRA14}.}
\end{abstract}

\section{INTRODUCTION}
\label{sec:introduction}

Assume that a mobile robot is equipped with a camera, that a picture of the environment is taken from a ``home'' location, and that the robot is subsequently moved to a new location, from which a new picture is taken. Assume that few landmarks can be detected in both images. The robot can then obtain the bearing (\ie, the direction vector) for each landmark with respect to the home and new locations. Bearing-based visual homing problem (Figure~\ref{fig:problem}) aims to find a control law that guides the robot toward the home location using only the extracted bearing information.

There is evidence that visual homing is naturally used as a mechanism to return to the nest by particular species of bees \cite{Cartwright:JCP83} and ants \cite{Wehner:JCPA03,McLeman:IS02}. 
In robotics, it has been used to navigate between nodes in a topological map of an environment \cite{Hong:CSM92}. 

\myparagraph{Previous work} In the last fifteen years, a number of approaches have been proposed in the literature. 
The majority of the approaches assumes that the position of the robot can be controlled directly (\ie, there are no non-holonomic constraints) and that a global compass direction is available (so that bearing directions can be compared in the same frame of reference). Historically, the first approach proposed for robotics applications is the Average Landmark Vector (ALV) method \cite{Lambrinos:GNC98}, where the difference of the averages of the bearing vectors at the home and current locations is used as the control law. This has the computational advantage of not requiring explicit matching. 
 However, until now, its convergence has been proved only empirically. 

\begin{figure}[t]
  \centering
  \begin{tikzpicture}[point/.style={circle,fill,inner sep=0pt,minimum size=1em},
    landmark/.style={point,myblue},
    home/.style={point,mygreen},
    current/.style={point,myred},
    descLabel/.style={draw,rounded corners,thick}
    ]
    \coordinate[landmark] (l1) at (-2,0.4);
    \coordinate[landmark] (l2) at (-2.5,-1);
    \coordinate[landmark] (l3) at (3,-0.5);
    \coordinate[landmark] (l4) at (-2.8,-2);
    \coordinate[home] (h) at (-0.2,-0.1);
    \coordinate[current] (c) at (0.5,-1.7);

    \foreach \base/\prop in {h/mygreen,c/myred} {
      \foreach \i in {1,...,4} {
        \draw[\prop,-latex,thick] (\base) -- ($(\base)!1cm!(l\i)$) coordinate (\base-l\i);
        \draw[\prop,dashed] (\base) -- (l\i);
      }
    }
    \draw[-latex,dashed,thick,black] (c) -- (h);

    \node[anchor=south,inner sep=0.75em] at (h) {$x_g$};
    \draw (c) +(80:0.6em) node[anchor=south west] {$x$};
    \node[anchor=south,inner sep=0.75em] at (l3) {$x_i$};
    \node[anchor=north] at (h-l3) {$\betagi$};
    \node[anchor=south] at (c-l3) {$\betai$};
    \draw (c) to[dim below=$\ri$] (l3);

    \draw (h) +(20:1.5em) node[anchor=south west,mygreen,descLabel] {Home location};
    \draw (c) +(-90:1em) node[anchor=north east,myred,descLabel] {Current location};
    \draw (l1) +(-150:2.2em) node[anchor=north,myblue,descLabel] {Landmarks};
  \end{tikzpicture}
  \caption{Illustration of the visual homing problem and notation.}
\label{fig:problem}
\end{figure}
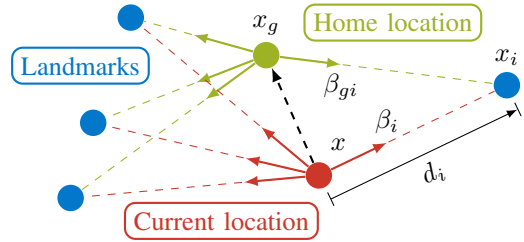
 
Another approach is the snapshot-model, which originated as a computational model for bees \cite{Cartwright:JCP83} and was first implemented in robotics in \cite{Moller:CVMRW98}. This model matches the bearings between the two locations, and uses their differences (possibly weighted with the apparent size of the landmarks) to compute the control vector. A comparison of some of these approaches under noisy conditions is given in \cite{Schroeter:IAS08}. Again, there is no theoretical proof of convergence, and it appears that this strategy works well only when the landmarks are well distributed \cite{Goedeme:CIRA05}. Variations of this approach \cite{Argyros:CVPR01,Lim:RSS09,Liu:ICRA13} use differences between the bearing angles instead of the direction themselves, which removes the assumption of a compass direction. While \cite{Argyros:CVPR01} combines all contributions together, \cite{Lim:RSS09,Liu:ICRA13} are based on small groups of bearings. 
No proof of convergence is given in \cite{Argyros:CVPR01}, while \cite{Lim:RSS09,Liu:ICRA13} show that the computed control direction always brings the robot closer to the home position, although no global convergence is shown. Moreover, the authors do not provide a way to decide the \emph{scale} of the motion.
 In contrast, \cite{Loizou:CDC07} requires a global compass direction and is limited to the 2-D case, but it provides global convergence proofs. 

A different line of work considers the problem as an instance of image-based visual servoing (IBVS) \cite{Corke:RR03}. The main limitation of this approach is that it requires range information. 
In practice, this information can be approximated with constant ranges, estimated by creating a map of the landmarks on-line \cite{Goedeme:CIRA05}, through the use of adaptive control \cite{Papanikolopoulos:TAC93} or using the relative scale of feature point descriptors \cite{Liu:ICRA12}. However, even with range information, only local convergence can be shown. A notable exception in this sense is Navigation Function (NF) approach given by \cite{Cowan:TRA02,Koditschek:book89}, which allows the specification of safety areas (\eg, such that the target is always visible) and the use of Lagrangian (second-order) dynamical models. However, it requires planar targets with known geometry, hence it does not apply to our situation.

Yet another line of work is based on the use of the \emph{essential matrix} \cite{Basri:ICCV98,Piazzi:CPR03,Rives:IROS00} in two-view geometry, which can be directly computed from the correspondences and provides an estimate of the relative pose between the home and current location. This approach does not require a global compass direction, but the estimation of the essential matrix becomes ill-conditioned in degenerate configurations (\eg, near the target home position), 
which need to be detected and addressed separately. 

Regarding the use of non-holonomic mobile robots, the work of \cite{Usher:ICRA03} extends the IBVS approach to a car-like model, while \cite{Wei:IJISTA05} extends the ALV approach and \cite{Mariottini:ICRA04,Mariottini:TRA07,LopezNicolas:ICRA06,Becerra:IROS08} extend the essential matrix approach to a unicycle model. The same limitations as the original works apply. 
An improvement is given by \cite{Aranda:AR13}, which uses the 1-D \emph{trifocal tensor} (an extension of the essential matrix to three views) to compute the relative angles between the desired and current bearings for the control law. This approach requires correspondences between four views, and is limited to 2-D bearings.

\myparagraph{Paper contributions}
In this paper, we make the assumption that a global compass direction is available. In \S\ref{sec:control-an-holonomic}, we focus on holonomic mobile robots and propose a control law derived from the gradient of a cost belonging to carefully chosen class of functions. We can show that this gradient does not depend on range information, and we give sufficient conditions for global convergence to a unique minimizer. Interestingly, for a specific choice of cost, we recover the ALV model, thus providing a rigorous proof of its global convergence (which, to the best of our knowledge, had not appeared before).
 Our approach applies to both the 2-D and 3-D cases.
In \S\ref{sec:sliding-mode-unicycle}, we extend our approach to a unicycle model using sliding mode control. 
We cast this application as a specific instance of the more general problem of moving a unicycle along the flow trajectories of an arbitrary gradient field, using only upper and lower bounds on the differential of the gradient.
Finally, in \S\ref{sec:sensitivity-analysis}, we show how our optimization framework can be used to generate heat maps indicating, for a given landmark configuration, the sensitivity of the homing position with respect to noise in the specification of the bearing. This kind of analysis has not appeared in any of the prior work.


\section{NOTATION AND DEFINITIONS}
We collect in this sections various notation conventions (see also Figure~\ref{fig:problem}) and other definitions.
We denote as $x_g\in\real{d}$ the coordinates of the home location and as $\{x_i\}_{i=1}^N\in \real{d}$ the location of the $N$ landmarks. For a given $i\in\{1,\ldots,N\}$, the bearing direction at a location $x\in\real{d}$ is defined as
\begin{equation}
  \betai(x)=\ri(x)\inverse (x_i-x),  
\end{equation}
where $\ri(x)$ contains the range information
\begin{equation}
  \ri(x)=\norm{x_i-x}.  
\end{equation}
We denote as $\betagi$ the home bearing directions, \ie,
\begin{equation}
  \label{eq:bearing-home}
  \betagi=\betai(x_g).
\end{equation}
We also define the following inner product:
\begin{equation}
  c_i(x)=\betagi\transpose \betai(x).
\end{equation}

In order to reduce clutter, when possible, we omit the explicit dependence on $x$ (\eg, we use $\betai$ instead of $\betai(x)$). 

We assume that the set of home bearing directions $\{\betagi\}$ is \emph{consistent}, \ie, that it exists $x_g\in\real{d}$ such that \eqref{eq:bearing-home} is satisfied for all $i\in\{1,\ldots,N\}$. Moreover, we assume that they are \emph{non-degenerate}, \ie, that such point $x_g$ is uniquely determined. It is easy to see that the bearing directions are non-degenerate if and only if we have $N\geq 2$ points non-collinear with $x_g$ (one bearing constraints the position of $x_g$ on a line, and another bearing fixes a point on this line).


We consider mobile robots governed by a particular cases of affine, driftless dynamical system of the form
\begin{equation}
  \label{eq:dynamical-model}
  \dot{z}=h(z)u,
\end{equation}
where $z$ represents the state of the system (such as the robot's location) and $u$ represents the inputs available for control. In particular, we will consider the case of a simple integrator (\S\ref{sec:control-an-holonomic}) and of a 2-D unicycle model (\S\ref{sec:sliding-mode-unicycle}).

We formally state the visual homing problem as follows.
\begin{problem}
  Given a set of desired bearing directions $\{\betagi\}$, define a control law $u$ for \eqref{eq:dynamical-model}, using $\{\betai\}$ alone, which asymptotically drives the robot's location to $x_g$.
\end{problem}

In various places, $\tx(t)=x_0+tv$ denotes a parametrized line in $\real{d}$, where $x_0,v\in\real{d}$ are arbitrary. The notation $\tilde{\cdot}$ indicate a function evaluated along $\tx(t)$, \eg, $\tvarphi(t)=\varphi\bigl(\tx(t)\bigr)$. 
Also, we denote as $P_v=I-vv\transpose$ the projection matrix on the space orthogonal to the normalized vector $v$.

Finally, given a mapping $\Phi:\real{d_1}\to\real{d_2}$, its \emph{differential} or \emph{Jacobian} is defined as the matrix $\D_x\Phi:\real{d_2\times d_1}$ such that
\begin{equation}
  \label{eq:definition-differential}
  \D_x\Phi v=\left.\dert \Phi(\tx)\right|_{t=0}
\end{equation}
for any choice of $v$ in $\tx$. This definition coincides with the usual one as a matrix of derivatives. As it is standard, we define the \emph{gradient} of a function $\varphi:\real{d}\to\real{}$ as
\begin{equation}
  \label{eq:definition-gradient}
  \gradx\varphi=\D_x\varphi\transpose.  
\end{equation}
 

\section{CONTROL OF AN HOLONOMIC ROBOT}
\label{sec:control-an-holonomic}
In this section, we will consider a linear integrator of the form $\dot{x}=u$, \ie, in \eqref{eq:dynamical-model}, we choose $z=x$,
where $x\in\real{d}$ represents the location of the robot and $h=I$, the identity matrix.
We define our control law from as a gradient descent of a cost function $\varphi:\real{d}\to\real{}$:
\begin{equation}
  \label{eq:control-input}
  u=-\gradx\varphi.
\end{equation}

To be more specific, given a univariate reshaping function $f:\real{}\to\real{}$, we define the cost function $\varphi$ as
\begin{align}
  \label{eq:cost}
  \varphi=\sum_{i=1}^N \varphi_i,\quad \varphii=r_i f(c_i).
\end{align}
Note that $\varphi$ is not defined on $\{x_i\}$. As we will show later, $f$ can be chosen such that $\varphi$ is radially unbounded \cite{khalil:book02} and that it has a global minimum at $x_g$ and no other critical points. This will imply global convergence of the closed-loop system. For the moment, we have the following result
\begin{proposition}\label{prop:grad-and-Dgrad}
The gradient of \eqref{eq:cost} is given by
\begin{equation}
\begin{aligned}
  \label{eq:gradient-cost}
  \gradx \varphi &= \sum_{i=1}^N \gradx\varphii, \\ \gradx\varphii&=-f(c_i) \betai - \df(c_i) P_{\betai} \betagi,
\end{aligned}
\end{equation}
while its differential is given by
\begin{gather}
  \label{eq:Dgradient-cost}
  \Dxgradx\varphi\!=\!\sum_{i=1}^N \Dxgradx\varphii, \;\;\Dxgradx\varphii=\ri\inverse H_i,\\
  H_i= \bigl(\ddf(\ci) (\ci\betai-\betagi)\betagi\transpose+(\df(\ci)\ci-f(\ci))I\bigr)P_{\betai}.
\end{gather}
\end{proposition}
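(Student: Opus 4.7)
The plan is to reduce everything to three elementary derivative identities for the building blocks $\ri(x)$, $\betai(x)$, and $\ci(x)$, and then assemble the results via the chain and product rules. The main obstacle will not be any single computation but the bookkeeping needed in the second part, where several outer-product terms appear and partial cancellations must be spotted.

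First I would compute the three basic derivatives. Differentiating $\ri^2 = \norm{x_i-x}^2$ gives $\gradx \ri = -\betai$. Applying the quotient rule to $\betai = \ri\inverse(x_i-x)$ and using $\D_x \ri\inverse = \ri^{-2}\betai\transpose$ produces $\D_x\betai = \ri\inverse\betai\betai\transpose - \ri\inverse I = -\ri\inverse \Pbetai$, where the projector arises naturally from grouping the two pieces. Since $\betagi$ is constant, the chain rule then yields $\gradx \ci = (\D_x\betai)\transpose\betagi = -\ri\inverse \Pbetai\betagi$.

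Step two is the gradient formula. Applying the product rule to $\varphii = \ri f(\ci)$ and substituting the identities above gives
\begin{equation*}
\gradx\varphii = f(\ci)\gradx\ri + \ri \df(\ci)\gradx \ci = -f(\ci)\betai - \df(\ci)\Pbetai\betagi,
\end{equation*}
which is the stated expression; summing over $i$ yields the full gradient.

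For the third step, the differential of the gradient, I would first rewrite the gradient in a form with no projector, using the identity $\Pbetai\betagi = \betagi - \ci\betai$ (valid because $\betai$ is unit and $\betai\transpose\betagi = \ci$). This converts $\gradx\varphii$ into $(\ci\df(\ci)-f(\ci))\betai - \df(\ci)\betagi$, in which the only $x$-dependent objects are $\betai$ and scalar functions of $\ci$. Differentiating term by term with the product and chain rules, one obtains contributions of three types: a scalar multiple of $\Pbetai$ coming from $\D_x\betai$; outer products of the form $\betai\betagi\transpose \Pbetai$ (which appear with both signs and cancel completely); and a single outer product $\betagi\betagi\transpose \Pbetai$ coming from differentiating $-\df(\ci)\betagi$. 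Collecting the survivors and using $\Pbetai\betagi = \betagi-\ci\betai$ once more, the result factors as $\ri\inverse$ times the sum of $\ddf(\ci)(\ci\betai-\betagi)\betagi\transpose$ and the scalar $(\df(\ci)\ci-f(\ci))I$, all multiplied by $\Pbetai$ on the right, which is the claimed form of $H_i$.

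The main difficulty is keeping the two cross terms of type $\betai\betagi\transpose \Pbetai$ visible long enough to see their cancellation; the simplification $\Pbetai\betagi = \betagi-\ci\betai$ used both at the start (to eliminate the projector before differentiating) and at the end (to recombine terms into the compact $(\ci\betai-\betagi)\betagi\transpose$ factor) is what makes the final expression come out cleanly.
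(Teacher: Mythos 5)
Your approach is the same as the paper's: the paper also computes the elementary derivatives of $\ri$, $\betai$ and $\ci$ (phrased there as directional derivatives along a line $\tx(t)=x+tv$ rather than as Jacobians, which is purely cosmetic), assembles the gradient by the product rule, rewrites it in the projector-free form $(\df(\ci)\ci-f(\ci))\betai-\df(\ci)\betagi$, and differentiates that expression again to obtain $H_i$.

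Two bookkeeping points in your third step need correction, however. First, the cancellation you describe is not the right one: differentiating the scalar coefficient $\ci\df(\ci)-f(\ci)$ gives $\bigl(\ci\ddf(\ci)+\df(\ci)-\df(\ci)\bigr)\D_x\ci$, so what cancels is the pair of $\df(\ci)$ contributions, while a term proportional to $\ci\ddf(\ci)\,\betai\betagi\transpose P_{\betai}$ \emph{survives} --- it is exactly what supplies the $\ci\betai\betagi\transpose$ piece of $(\ci\betai-\betagi)\betagi\transpose$ in $H_i$. If all outer products of the form $\betai\betagi\transpose P_{\betai}$ really cancelled, as you claim, you could not recover that piece from the two survivors you list. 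Second, carrying the signs through ($\D_x\betai=-\ri\inverse P_{\betai}$, $\D_x\ci=-\ri\inverse\betagi\transpose P_{\betai}$) yields $\Dxgradx\varphii=-\ri\inverse H_i$, not $+\ri\inverse H_i$. This agrees with the final line of the paper's own appendix derivation, and with the sanity check $f(c)=1-c$: there $\gradx\varphii=\betagi-\betai$, so $\Dxgradx\varphii=-\D_x\betai=\ri\inverse P_{\betai}$, whereas $H_i=-P_{\betai}$. The sign in the proposition statement is therefore itself off, and a careful execution of your plan should land on $-\ri\inverse H_i$ rather than reproducing the stated formula verbatim; as written, your narrative asserts a result that the computation it describes does not produce.
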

See \S\ref{sec:proof-gradient} in the Appendix for a proof. In this section we use only \eqref{eq:gradient-cost}, while \eqref{eq:Dgradient-cost} will be used in \S\ref{sec:sliding-mode-unicycle}.

Notice that, although the cost $\varphi$ depends on the range information $\{\ri\}$, the gradient $\gradx\varphi$, and hence the control law \eqref{eq:control-input}, depend only on the bearing directions $\{\betai\}$ and $\{\betagi\}$. Regarding the choice of $f$, we use the following:
\begin{assumption}
\label{ass:reshaping}
  The function $f:[-1,1]\to\real{}$ satisfies:
\begin{align}
  f(1)&=0,\label{eq:property-f-minval}\\
  \df(c)&\begin{cases}\leq0& \textrm{and finite for } c=1,\\<0 &\textrm{otherwise},\end{cases}\label{eq:property-f-firstderivative}\\
  f(c)+(1-c)\df(c)&\leq 0. \label{eq:property-f-derivativemix}
\end{align}
\end{assumption}

Note that \eqref{eq:property-f-minval} with \eqref{eq:property-f-firstderivative} implies that $f$ is non-negative,
and that induces an ordering of the cosines,
\begin{equation}
  \label{eq:ordering}
  f(c_1)>f(c_2)\textrm{ if } c_1<c_2.
\end{equation}

We first show that $\varphi$ can be used as a cost function for visual homing, \ie, that the function has a global minimizer at the correct location. 
\begin{lemma}\label{lemma:global-minimum}
  The function $\varphi$ is everywhere non-negative and has a unique global minimizer at $x_g$.
\end{lemma}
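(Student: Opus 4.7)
The plan decomposes into three short steps. First, I would establish that $f(c) \geq 0$ on $[-1,1]$, with equality only at $c=1$: by \eqref{eq:property-f-minval} we have $f(1)=0$, and by \eqref{eq:property-f-firstderivative} we have $\df(c) < 0$ for every $c \in [-1,1)$. Integrating $\df$ on $[c,1]$ yields $f(c) = -\int_c^1 \df(s)\,\de s > 0$ for $c < 1$. Combined with $\ri(x) > 0$ on the domain of $\varphi$ (which excludes the landmarks $\{x_i\}$), this gives $\varphii \geq 0$, and hence $\varphi \geq 0$ everywhere.

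Second, I would evaluate $\varphi$ at $x_g$. By \eqref{eq:bearing-home} one has $\betai(x_g) = \betagi$, so $\ci(x_g) = \betagi\transpose \betagi = 1$ (since $\betagi$ is a unit vector), and thus $\varphii(x_g) = \ri(x_g) f(1) = 0$ for every $i$. Summing gives $\varphi(x_g) = 0$, which, combined with the non-negativity above, shows that $x_g$ attains the global minimum.

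For uniqueness, I would suppose $\varphi(x^*) = 0$ at some point $x^*$ in the domain and argue by contradiction that $x^* = x_g$. Non-negativity of each summand forces $\ri(x^*) f(\ci(x^*)) = 0$ for every $i$; since $x^*$ is not a landmark, $\ri(x^*) > 0$, so $f(\ci(x^*)) = 0$. The first step showed that $c=1$ is the only zero of $f$ on $[-1,1]$, hence $\betagi\transpose \betai(x^*) = 1$. Because both $\betagi$ and $\betai(x^*)$ are unit vectors, the equality forces $\betai(x^*) = \betagi$ for every $i$. The non-degeneracy hypothesis on $\{\betagi\}$ stated in the Notation section then yields $x^* = x_g$.

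The argument is elementary; the only substantive ingredient is the combined use of Assumption~\ref{ass:reshaping} (to pinpoint $c=1$ as the unique zero of $f$) and the non-degeneracy of the home bearings (to turn a system of bearing equalities into a unique point). Note that neither the mixed-derivative condition \eqref{eq:property-f-derivativemix} nor the explicit gradient formula of Proposition~\ref{prop:grad-and-Dgrad} is needed here; those will be required later to rule out spurious critical points of $\varphi$ and to obtain global convergence of the closed loop.
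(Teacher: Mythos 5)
Your proof is correct and follows essentially the same route as the paper's: non-negativity of each summand $\varphii=\ri f(\ci)$ from $f\geq 0$ (which the paper derives from \eqref{eq:property-f-minval} and \eqref{eq:property-f-firstderivative}), the vanishing of $\varphi$ at $x_g$, and uniqueness via $\ci=1\Rightarrow\betai=\betagi$ together with non-degeneracy of the home bearings. Your version merely fills in details the paper leaves implicit (the integration argument pinpointing $c=1$ as the unique zero of $f$, and the explicit appeal to non-degeneracy), so no substantive difference.
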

\begin{proof}
 ce $\ri$ and $f(c_i)$ are non-negative, each term in $\varphi$ is non-negative. It follows that $\varphi$ is also non-negative, and it is zero iff each term $\varphii$ is zero for all $i$. From \eqref{eq:property-f-minval}, this implies $c_i=1$ for all $i$, that is, the current bearings coincide with the desired ones, and $x=x_g$.
\end{proof}
As we are interested in global convergence, we need to show that there are no other local minimizers. At a high level, due to the fact that the function $\varphi$ is in general non-convex, we proceed by showing that the cost is always increasing when evaluated along lines emanating from $x_g$ in arbitrary directions. From this, we will exclude the presence of unwanted local minimizers. In order to carry out this program on the entire cost $\varphi$, we first prove a similar result on each of its terms $\varphii$.
\begin{lemma}\label{lemma:derivative-lines}
  Define the line $\tx(t)=x_g+tv$, where $v\in\real{d}$ is arbitrary. Then, the derivative of the function $\tvarphii(t)=\varphi_i\bigl(\tx(t)\bigr)$ satisfies the following. If $t=0$ or $v=a\betagi$, $a\leq 0$, then $\dtvarphii\equiv0$. Otherwise, $\dtvarphii>0$ for all $t>0$, except when $v=a\betagi$, $a>0$, for which
  \begin{equation}
    \dtvarphii
    \begin{cases}
      = 0 & \textrm{for } t\in \left[0,\frac{\norm{x_i-x_g}}{\norm{v}}\right),\\
      > 0 & \textrm{for } t>\frac{\norm{x_i-x_g}}{\norm{v}}.\\
    \end{cases}
  \end{equation}
\end{lemma}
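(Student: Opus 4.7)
The plan is to combine the gradient formula of Proposition~\ref{prop:grad-and-Dgrad} with the chain rule, handle the two degenerate configurations directly, and then attack the main case by reparametrizing via the angle between the current and desired bearings so that Assumption~\ref{ass:reshaping} reduces to a clean trigonometric inequality.

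From $\dtvarphii=\gradx\varphii(\tx(t))^T v$ combined with \eqref{eq:gradient-cost},
\[
\dtvarphii=-f(\tci)\,\tbetai^T v-\df(\tci)\,v^T P_{\tbetai}\betagi,
\]
where $\tbetai=\betai(\tx(t))$. At $t=0$ we have $\tbetai=\betagi$, so $\tci=1$, which by \eqref{eq:property-f-minval} forces $f(\tci)=0$, while $P_{\betagi}\betagi=0$; hence $\dtvarphii(0)=0$. In the collinear case $v=a\betagi$, a direct computation yields $x_i-\tx(t)=(\rho-ta)\betagi$ with $\rho:=\norm{x_i-x_g}$, so $\tbetai=\sign(\rho-ta)\,\betagi$ and $P_{\tbetai}\betagi\equiv 0$. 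When $\sign(\rho-ta)=1$ (i.e., $a\le 0$, or $a>0$ with $t<\rho/a$) we get $f(\tci)=0$ and hence $\dtvarphii=0$; in the remaining sub-case $a>0$, $t>\rho/a$, we have $\tci=-1$ and $\tbetai^T v=-a$, giving $\dtvarphii=a\,f(-1)>0$, where $f(-1)>0$ follows from $f(1)=0$ together with \eqref{eq:property-f-firstderivative}.

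For the main case, assume $v_\perp:=(I-\betagi\betagi^T)v\neq 0$. Let $\alpha\in(0,\pi)$ be the fixed angle between $v$ and $\betagi$, and let $\theta(t)\in[0,\pi-\alpha)$ be the angle between $\tbetai(t)$ and $\betagi$, so $\tci=\cos\theta$ and $\theta(0)=0$. Differentiating the identity $\tri\,\tci=\rho-t\betagi^T v$ and simplifying yields $d\tci/dt=-t\rho\norm{v_\perp}^2/\tri^3<0$ for $t>0$, so $\theta$ is strictly increasing on $t\ge 0$. The triangle with vertices $x_g$, $x_i$, $\tx(t)$ has interior angles $\alpha$, $\theta$, $\pi-\alpha-\theta$, and the sine rule produces $\tri=\rho\sin\alpha/\sin(\alpha+\theta)$. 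Setting $g(\theta):=f(\cos\theta)$, we recast $\tvarphii$ as
\[
\tvarphii(\theta)=\frac{\rho\sin\alpha\,g(\theta)}{\sin(\alpha+\theta)},\qquad\frac{d}{d\theta}\log\tvarphii=\frac{g'(\theta)}{g(\theta)}-\cot(\alpha+\theta).
\]

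The crux is translating \eqref{eq:property-f-derivativemix} into a lower bound on $g'/g$: substituting $c=\cos\theta$ and $\df(c)=-g'(\theta)/\sin\theta$ into $f(c)+(1-c)\df(c)\le 0$, and invoking the half-angle identity $(1-\cos\theta)/\sin\theta=\tan(\theta/2)$, I obtain
\[
\frac{g'(\theta)}{g(\theta)}\ge\cot\!\left(\frac{\theta}{2}\right)\quad\text{for }\theta\in(0,\pi).
\]
Since $0<\theta/2<\alpha+\theta<\pi$ and $\cot$ is strictly decreasing on $(0,\pi)$, we have $\cot(\theta/2)>\cot(\alpha+\theta)$, so $d\log\tvarphii/d\theta>0$ strictly on $(0,\pi-\alpha)$; composing with $d\theta/dt>0$ delivers $\dtvarphii>0$ for $t>0$. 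The main obstacle is spotting this angular reformulation of \eqref{eq:property-f-derivativemix}; without it one is forced into a case analysis on the signs of $\dot{\tri}$ and $\cos(\alpha+\theta)$, which works but is considerably less clean.
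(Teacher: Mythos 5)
Your proof is correct, and the main case is handled by a genuinely different argument from the paper's. The degenerate situations ($t=0$ and $v=a\betagi$) are dispatched essentially as in the paper (note that at $t=0$ you are implicitly using the finiteness of $\df(1)$ from \eqref{eq:property-f-firstderivative} so that $\df(1)\,v\transpose P_{\betagi}\betagi$ is a well-defined zero, and in the non-collinear case you divide by $g(\theta)=f(\tci)$, whose strict positivity for $\theta>0$ follows from \eqref{eq:ordering}; both points deserve a word but are covered by Assumption~\ref{ass:reshaping}). Where you diverge is the non-collinear case: the paper normalizes coordinates (centering at $x_i$, rescaling, rotating to a plane), writes $\tri$, $\tbetai$, $\tci$ explicitly, and then runs a sign-based case analysis on $v\transpose\tbetai$ and on $f(\tci)-\df(\tci)\tci$, invoking \eqref{eq:property-f-derivativemix} only in the last sub-case. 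You instead reparametrize by the bearing angle $\theta$, use the law of sines to get $\tri=\rho\sin\alpha/\sin(\alpha+\theta)$, and collapse the whole argument into the single chain $\frac{d}{d\theta}\log\tvarphii=\frac{g'}{g}-\cot(\alpha+\theta)\geq\cot(\theta/2)-\cot(\alpha+\theta)>0$, where the first inequality is exactly \eqref{eq:property-f-derivativemix} rewritten via the half-angle identity and the second is the monotonicity of $\cot$ on $(0,\pi)$ together with $\theta/2<\alpha+\theta<\pi$. Your computation $\dot{\tci}=-t\rho\norm{v_\perp}^2/\tri^3$ agrees with the paper's explicit formula, so the monotonicity of $\theta(t)$ is sound. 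What your route buys is the elimination of the case analysis and a transparent geometric reading of \eqref{eq:property-f-derivativemix} as a lower bound $g'/g\geq\cot(\theta/2)$ on the logarithmic derivative of the reshaped angle cost; what the paper's route buys is a purely algebraic argument that never needs the triangle $(x_g,x_i,\tx)$ to be non-degenerate or the map $t\mapsto\theta$ to be inverted, at the price of more bookkeeping. Either proof establishes the lemma.
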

The proof of this lemma is somewhat involved, and can be found in \S\ref{sec:proof-derivative-lines} of the Appendix.

We then use this result on the entire cost function $\varphi$ and obtain the following.

\begin{proposition}\label{prop:unique-critical-point}
  The function $\varphi$ has only a global minimizer and no other critical points.
\end{proposition}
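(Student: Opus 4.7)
The plan is to apply Lemma~\ref{lemma:derivative-lines} termwise and then aggregate over the landmarks, using non-degeneracy of the home bearings to prevent simultaneous cancellation. I would fix a nonzero $v\in\real{d}$, consider the parametrized line $\tx(t)=x_g+tv$, and set $\tvarphi(t)=\sum_{i=1}^N\tvarphii(t)$ with $\tvarphii(t)=\varphii(\tx(t))$. Lemma~\ref{lemma:derivative-lines} gives that each $\dtvarphii(t)$ is non-negative for all $t\geq 0$ and, crucially, that the only way $\dtvarphii(t_\ast)=0$ can hold at some $t_\ast>0$ is for $v$ to be a scalar multiple of $\betagi$.

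The second step exploits the non-degeneracy assumption on $\{\betagi\}$: since at least two of the home bearings are non-collinear, no nonzero $v$ can be parallel to every $\betagi$ at once. Hence there exists an index $i_0$ with $v\not\parallel\beta_{gi_0}$, and Lemma~\ref{lemma:derivative-lines} then forces $\dot{\tilde{\varphi}}_{i_0}(t)>0$ for all $t>0$; since the remaining terms contribute non-negatively to the sum, $\dtvarphi(t)>0$ for every $t>0$ and every nonzero $v$. In other words, $\tvarphi$ is strictly increasing along every ray emanating from $x_g$.

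I would then translate this into the absence of unwanted critical points. For any candidate $x^\ast\neq x_g$, choose $v=x^\ast-x_g$ so that $\tx(1)=x^\ast$; by the chain rule and~\eqref{eq:definition-gradient}, $\dtvarphi(1)=(\gradx\varphi)(x^\ast)\transpose v$. The preceding step makes $\dtvarphi(1)>0$, so $(\gradx\varphi)(x^\ast)\neq 0$ and $x^\ast$ cannot be a critical point. Combined with Lemma~\ref{lemma:global-minimum}, which identifies $x_g$ as the unique global minimizer (hence automatically a critical point), this proves the proposition.

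The crux of the argument is the aggregation step: it works only because Lemma~\ref{lemma:derivative-lines} provides a sign-consistent lower bound on each $\dtvarphii$, so the sum cannot cancel. Without Assumption~\ref{ass:reshaping} one would lose this sign information, and without non-degeneracy of $\{\betagi\}$ there could be a direction along which every term is identically zero; both hypotheses are therefore used in an essential way.
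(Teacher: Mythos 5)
Your proposal is correct and follows essentially the same route as the paper: apply Lemma~\ref{lemma:derivative-lines} term by term along the ray $\tx(t)=x_g+tv$, use non-degeneracy of $\{\betagi\}$ to guarantee at least one strictly positive term in the sum, and conclude via \eqref{eq:definition-differential}--\eqref{eq:definition-gradient} that the gradient is nonzero at every $x^\ast\neq x_g$. The only difference is presentational: you make explicit the observation that a term can vanish at some $t>0$ only when $v\parallel\betagi$, which the paper leaves implicit.
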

\begin{proof}
  From Lemma~\ref{lemma:global-minimum}, $x_g$ is a global minimizer. Consider an arbitrary point $x_0\neq x_g$ and the curve $\tx(t)=x_g+t(x_0-xg)$ (notice that $\tx(1)=x_0$). If we can show that $\left.\dert\varphi(\tx)\right|_{t=1}\neq 0$, then, by definition~\eqref{eq:definition-differential}, $\gradx\varphi(x_0)\neq 0$, and $x_0$ cannot be a critical point. In fact, by linearity,
  \begin{equation}
    \label{eq:derivative-decomposition}
    \left.\dert\varphi(\tx)\right|_{t=1}=\sum_{i=1}^N \left.\dert\varphii(\tx)\right|_{t=1}.
  \end{equation}
From Lemma~\ref{lemma:derivative-lines}, each term on the RHS of \eqref{eq:derivative-decomposition} is non-negative, and, since the bearing directions are non-degenerate and non-collinear, at least one is strictly positive. Hence $\left.\dert\varphi(\tx)\right|_{t=1}>0$, $\gradx\phi(x_0)\neq 0$ (from \eqref{eq:definition-differential}--\eqref{eq:definition-gradient}), and $\varphi$ cannot have a critical point at $x_0$.
\end{proof}

The final ingredient we need for our convergence result is to show that the trajectories of the closed-loop system alway lie in a compact set. This is implied by the following.
\begin{proposition}\label{prop:radially-unbounded}
 The function $\varphi$ is radially unbounded.  
\end{proposition}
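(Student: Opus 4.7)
The plan is to analyze the asymptotic behavior of $\varphi$ along rays emanating from $x_g$. Writing $x = x_g + tw$ with $t > 0$ and $w$ a unit vector, I would show that $\varphi(x)/t$ converges, uniformly in $w$, to a strictly positive limit as $t \to \infty$; since $\|x\|\to\infty$ forces $t = \|x - x_g\|\to\infty$, this immediately yields radial unboundedness.

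First I would compute the asymptotic form of each term $\varphii(x) = \ri(x) f(\ci(x))$. Because $\|x_i - x_g\|$ is a fixed constant,
\begin{equation}
  \frac{\ri(x)}{t} \;=\; \left\lVert w - \frac{x_i - x_g}{t} \right\rVert \;\longrightarrow\; 1
\end{equation}
as $t\to\infty$, uniformly in $w\in\sphere{d-1}$. Consequently $\betai(x) = (x_i - x_g - tw)/\ri(x) \to -w$ uniformly, so $\ci(x) \to -\betagi\transpose w$ uniformly, and by continuity of $f$,
\begin{equation}
  \frac{\varphi(x_g + tw)}{t} \;\longrightarrow\; \alpha(w) \defeq \sum_{i=1}^N f(-\betagi\transpose w),
\end{equation}
again uniformly in $w$.

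Next I would show $\alpha > 0$ on $\sphere{d-1}$. By \eqref{eq:property-f-minval} together with the strict negativity in \eqref{eq:property-f-firstderivative}, $f$ is strictly decreasing on $[-1,1)$ with $f(1)=0$, so $f(c) = 0$ iff $c = 1$; thus $\alpha(w) = 0$ would force $\betagi = -w$ for every $i$, meaning all landmarks lie on the single half-line $\{x_g - sw : s > 0\}$ and in particular are collinear with $x_g$, contradicting non-degeneracy. By continuity of $\alpha$ and compactness of $\sphere{d-1}$, $\alpha_{\min} \defeq \min_{w} \alpha(w) > 0$. Combining this with the uniform limit above, there exists $T > 0$ such that $\varphi(x_g + tw) \geq \tfrac{1}{2}\alpha_{\min}\,t$ for all $t\geq T$ and all $w\in\sphere{d-1}$, which proves radial unboundedness. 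The only delicate point is the uniformity of the limit $\ri(x)/t \to 1$ over $w$; once that is established, the rest is continuity and compactness.
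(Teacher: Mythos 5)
Your proof is correct and shares the paper's core strategy: bound $\varphi$ from below along rays $x_g+tw$ by a function growing linearly in $t$ with a slope bounded away from zero uniformly over directions, and invoke non-degeneracy in exactly the same way (a vanishing slope forces $\betagi=-w$ for all $i$, i.e., collinearity of all landmarks with $x_g$). The execution, however, is genuinely different. The paper derives a \emph{non-asymptotic} bound $\tvarphi\geq(t-\rgmax)\bar f$ valid for all $t>\rgmax$, with $\bar f=\min_v\max_i f(v\transpose\betagi)$, by combining the triangle inequality $\tri\geq t-\rgi$ with the monotone behavior of $\tci$ along rays imported from the proof of Lemma~\ref{lemma:derivative-lines}; it then phrases radial unboundedness as compactness of sublevel sets and uses the star-shapedness that follows from Proposition~\ref{prop:unique-critical-point} to fit each sublevel set inside an explicit ball. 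You instead compute the exact asymptotic slope $\alpha(w)=\sum_i f(-\betagi\transpose w)$ via uniform limits (the delicate point you flag is handled by $\abs{\ri/t-1}\leq\norm{x_i-x_g}/t$, which is independent of $w$) and extract $\alpha_{\min}>0$ by continuity of $f$ and compactness of $\sphere{d-1}$. Your route is self-contained --- it needs only $f$ continuous, $f\geq0$, and $f(c)=0$ exactly at $c=1$, all consequences of Assumption~\ref{ass:reshaping} --- and it avoids both the monotonicity machinery and the star-shapedness argument; what it gives up is an explicit radius for each sublevel set, since the bound $\varphi\geq\tfrac{1}{2}\alpha_{\min}\,t$ only holds past a threshold $T$ determined by the uniform convergence rather than in closed form. (Incidentally, since all terms are non-negative, your constant $\min_w\sum_i f(-\betagi\transpose w)$ dominates the paper's $\min$-$\max$ constant, so your asymptotic bound is at least as tight.)
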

The proof relies on showing that at least one term $\varphii$ grows unbounded in any arbitrary direction%
\ifthenelse{\boolean{isconference}}{%
{} (see \cite{Tron:Arxiv14} for details).
}{%
, and it can be found in \S\ref{sec:proof-radially-unbounded} of the Appendix.%
}
We then arrive to the following theorem, which represents our first main result and shows that the control law \eqref{eq:control-input} provides global asymptotic convergence.
\begin{theorem}\label{thm:convergence-integrator}
All the trajectories of the closed-loop system
\begin{equation}
  \label{eq:feedback-system}
  \dot{x}=-\gradx{\varphi(x)}
\end{equation}
converge asymptotically to $x_g$.
\end{theorem}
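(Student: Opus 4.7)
The plan is to use the cost $\varphi$ itself as a Lyapunov function and combine the three preceding results (Lemma~\ref{lemma:global-minimum}, Proposition~\ref{prop:unique-critical-point}, and Proposition~\ref{prop:radially-unbounded}) with a standard LaSalle-type argument. All the heavy lifting has already been done; what remains is essentially assembling the pieces.

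First I would compute the derivative of $\varphi$ along any closed-loop trajectory. Since $\dot{x}=-\gradx\varphi$ and $\D_x\varphi = (\gradx\varphi)\transpose$, the chain rule yields
\begin{equation}
\dert \varphi(x(t)) = (\gradx\varphi)\transpose \dot{x} = -\norm{\gradx\varphi}^2 \leq 0,
\end{equation}
so $\varphi$ is non-increasing along trajectories. In particular, for any initial condition $x(0)=x_0$, the trajectory stays in the sublevel set $\Omega_{x_0}=\{x\in\real{d}:\varphi(x)\leq\varphi(x_0)\}$.

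Next I would invoke Proposition~\ref{prop:radially-unbounded} to argue that $\Omega_{x_0}$ is compact: radial unboundedness of $\varphi$ together with its continuity (away from $\{x_i\}$, which is fine since trajectories will not approach landmarks — indeed $\varphi\to\infty$ there as well by inspection of \eqref{eq:cost}) guarantees that the sublevel sets are closed and bounded. Thus solutions exist for all $t\geq 0$ and remain in a compact forward-invariant set.

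Finally I would apply LaSalle's invariance principle (as stated, for example, in \cite{khalil:book02}) on $\Omega_{x_0}$: every trajectory converges to the largest invariant subset of $E=\{x\in\Omega_{x_0}:\gradx\varphi=0\}$. By Proposition~\ref{prop:unique-critical-point}, the only point at which $\gradx\varphi$ vanishes is $x_g$, so $E=\{x_g\}$, which is trivially invariant. Therefore every trajectory converges asymptotically to $x_g$, establishing the theorem. The only mild subtlety is justifying that trajectories remain in the domain of definition of $\varphi$ (away from the landmarks), but this is immediate from the fact that $\varphi$ blows up at each $x_i$ and is non-increasing along the flow.
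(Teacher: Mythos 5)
Your overall assembly---using $\varphi$ itself as the Lyapunov function, compactness of the sublevel sets $\Omega_{x_0}$ from Proposition~\ref{prop:radially-unbounded}, and LaSalle's invariance principle together with Proposition~\ref{prop:unique-critical-point} to identify the limit set with $\{x_g\}$---is exactly the ``standard Lyapunov stability argument'' the paper invokes in one sentence without spelling out, so on the main line you and the paper coincide.

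The one step that is concretely wrong is your resolution of the landmark subtlety: $\varphi$ does \emph{not} blow up at the points $x_i$. Each term is $\varphii=\ri f(\ci)$ with $f$ bounded on $[-1,1]$, so $\varphii\to 0$ as $x\to x_i$ and $\varphi$ remains bounded near every landmark; the $x_i$ therefore lie in (the closure of) every nonempty sublevel set, and what fails there is not finiteness of $\varphi$ but continuity of the vector field, since $\betai$ has no limit at $x_i$. Consequently your appeal to a ``barrier'' in $\varphi$ cannot exclude trajectories approaching a landmark, and the issue is not vacuous: from \eqref{eq:gradient-cost} the closed-loop field $-\gradx\varphi$ contains the term $f(\ci)\betai$, which is a non-negative multiple of the unit vector pointing \emph{toward} $x_i$, so near a landmark the flow can have an attracting radial component and, for suitable configurations, individual trajectories can reach $x_i$ in finite time, where the right-hand side of \eqref{eq:feedback-system} is undefined. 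An honest completion of the proof must either restrict the claim to trajectories that stay away from $\{x_i\}$ (or to almost all initial conditions) or supply a separate argument excluding the landmarks; monotone decrease of $\varphi$ alone does not do it. In fairness, the paper's own one-line proof is equally silent on this point---but the specific assertion you use to dismiss it (``$\varphi$ blows up at each $x_i$'') is false, so as written your argument has a gap precisely at the spot you flagged as the ``only mild subtlety.''
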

The solutions to \eqref{eq:feedback-system} are also known as the \emph{gradient flow}.
This result follows easily from Propositions~\ref{prop:radially-unbounded} and~\ref{prop:unique-critical-point} and a standard Lyapunov stability argument.

In practice, we have different options for the function $f$. The following proposition lists a couple of them.
\begin{proposition}\label{prop:functions}
  These functions satisfy Assumption~\ref{ass:reshaping}:
  \begin{align}
    f(c)&=1-c \label{eq:f-cosine}\\
    f(c)&=\frac{1}{2}\arccos^2(c) \label{eq:f-angleSq}
  \end{align}
\end{proposition}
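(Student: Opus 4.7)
The plan is to simply verify, for each candidate $f$, the three conditions \eqref{eq:property-f-minval}, \eqref{eq:property-f-firstderivative}, and \eqref{eq:property-f-derivativemix} of Assumption~\ref{ass:reshaping} in turn. The first function is essentially a one-line check, while the second requires a little trigonometric bookkeeping and a standard inequality.

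For $f(c)=1-c$, I would note that $f(1)=0$, that $\df(c)=-1$ is constant (hence negative and finite everywhere including $c=1$), and that $f(c)+(1-c)\df(c)=(1-c)-(1-c)=0$, which verifies \eqref{eq:property-f-derivativemix} with equality. This is immediate and requires no further comment.

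For $f(c)=\tfrac12\arccos^2(c)$, condition \eqref{eq:property-f-minval} is again trivial since $\arccos(1)=0$. Differentiating gives $\df(c)=-\arccos(c)/\sqrt{1-c^2}$, which is strictly negative on $(-1,1)$; the only issue is the apparent $0/0$ form at $c=1$, which I would resolve by expanding $\arccos(c)\approx\sqrt{2(1-c)}$ and $\sqrt{1-c^2}\approx\sqrt{2(1-c)}$ as $c\to 1$, obtaining the finite limit $\df(1)=-1$, so \eqref{eq:property-f-firstderivative} holds. For \eqref{eq:property-f-derivativemix}, the cleanest approach is to substitute $\theta=\arccos(c)\in[0,\pi]$, so that $c=\cos\theta$, $\sqrt{1-c^2}=\sin\theta$, and $1-c=2\sin^2(\theta/2)$. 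A direct simplification then yields
\begin{equation}
  f(c)+(1-c)\df(c)=\frac{\theta^2}{2}-2\sin^2(\theta/2)\cdot\frac{\theta}{\sin\theta}=\theta\Bigl(\frac{\theta}{2}-\tan(\theta/2)\Bigr),
\end{equation}
using $\sin\theta=2\sin(\theta/2)\cos(\theta/2)$.

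The one place requiring any actual argument is showing that this last expression is nonpositive. Since $\theta\geq 0$, it suffices to check $\tan(\theta/2)\geq\theta/2$ for $\theta/2\in[0,\pi/2)$, which is the classical inequality $\tan(x)\geq x$ on $[0,\pi/2)$ (provable, for instance, by noting that $\tan(x)-x$ vanishes at $x=0$ and has derivative $\sec^2(x)-1\geq 0$). The case $\theta=\pi$ corresponds to $c=-1$, which I would handle as a limiting case where $\tan(\theta/2)\to+\infty$ and the bound holds trivially. This completes the verification for the second function, so no step in the proof is genuinely hard; the main care point is just the limit at $c=1$ in the $\arccos$ case.
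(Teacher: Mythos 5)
Your proof is correct, and for the first function and for properties \eqref{eq:property-f-minval}--\eqref{eq:property-f-firstderivative} of the second it matches the paper's argument (the paper resolves the $0/0$ form of $\df$ at $c=1$ with L'H\^opital's rule where you use the equivalent expansion $\arccos(c)\sim\sqrt{2(1-c)}$). The only genuine divergence is in verifying \eqref{eq:property-f-derivativemix} for $f(c)=\tfrac{1}{2}\arccos^2(c)$. Both arguments factor out $\arccos(c)\geq 0$ and must show the remaining factor is nonpositive; the paper stays in the variable $c$, defines $b_s(c)=\arccos(c)-\frac{2(1-c)}{\sqrt{1-c^2}}$, and concludes $b_s\leq 0$ from $\frac{\de b_s}{\de c}=\frac{(1-c)^2}{(1-c^2)^{3/2}}\geq 0$ together with $\lim_{c\to 1^-}b_s(c)=0$, whereas you substitute $\theta=\arccos(c)$, which turns the same factor into $2\bigl(\tfrac{\theta}{2}-\tan(\theta/2)\bigr)$ and reduces the claim to the classical inequality $\tan x\geq x$ on $[0,\pi/2)$. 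These are really the same monotonicity argument in different coordinates (indeed $b_s(\cos\theta)=\theta-2\tan(\theta/2)$), but your trigonometric substitution yields a cleaner computation and an appeal to a standard fact, at the small cost of treating $c=-1$ (where $\tan(\theta/2)$ blows up) as a separate limiting case --- an endpoint the paper's formulation also leaves implicit, since $b_s$ is likewise undefined there. There is no gap in your argument.
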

Notice that \eqref{eq:f-cosine} is the simplest function one can think of, and it is related to the cosine of the angle between the bearings, while \eqref{eq:f-angleSq} represents the square of the angle itself.

The proof of this proposition (a simple direct calculation), can be found in %
\ifthenelse{\boolean{isconference}}{%
\cite{Tron:Arxiv14}.
}{%
\S\ref{sec:proof-functions} of the Appendix.
}
Note that by choosing \eqref{eq:f-cosine}, the gradient \eqref{eq:gradient-cost} simplifies to
\begin{equation}
  \label{eq:gradient-cost-f-cosine}
  \gradx\varphi=-\sum_{i=1}^N \betai+\sum_{i=1}^N \betagi,
\end{equation}
which is equivalent to the ALV method mentioned in the introduction. In this case, we do not need to know the exact correspondences between $\{\betai\}$ and $\{\betagi\}$ (the two sums in \eqref{eq:gradient-cost-f-cosine} can be computed separately).

\section{SLIDING MODE CONTROL OF A UNICYCLE}
\label{sec:sliding-mode-unicycle}
In this section, we will extend the results above to a robot governed by a non-linear 2-D unicycle model of the form \eqref{eq:dynamical-model} where
\begin{align}
  \label{eq:unicycle-model}
  z=\bmat{x\\\theta}, && h=\bmat{\cos(\theta) & 0\\ \sin(\theta) & 0\\ 0 & 1}, && u=\bmat{\nu\\\omega},  
\end{align}
and where $x\in\real{2}$ represents the location of the robot, $\theta\in(-\pi,\pi]$ represents the orientation of the unicycle, and $\nu,\omega\in\real{}$ represents the linear and angular velocity inputs, respectively. Since we make the assumption that a global compass direction is available, we assume $\theta$ to be known.

Our main goal is to make the unicycle follow the flow of the gradient field of $\varphi$. More precisely, we will propose a control law such that, with the model given by \eqref{eq:dynamical-model} and \eqref{eq:unicycle-model},
\begin{equation}
  \label{eq:asymptotic-xdot}
  \dot{x}=-k_\nu\gradx\varphi \quad \forall t>T,  
\end{equation}
for some $T<\infty$.
The arbitrary constant $k_\nu>0$ fixes the ratio between the magnitude of the gradient and the speed of the unicycle.
Since we have shown in \S\ref{sec:control-an-holonomic} that the flow trajectories of the gradient field converge asymptotically toward $x_g$, then \eqref{eq:asymptotic-xdot} implies that $x$ converges toward $x_g$ also for the unicycle model. Note that, if necessary, a second control phase can then be applied to rotate the unicycle to a desired pose.
At first, this might appear to be an instance of a path-following problem, where we the cart should follow a prescribed trajectory, and for which solutions already exist. However, in our case, we have two important differences. First, the path is given implicitly by the gradient field, and not explicitly. Second, the cart can follow any flow line, and not one determined a priori.

Since the system has two inputs ($\omega$ and $\nu$), we will decompose our analysis in two parts as follows.
Define $\theta_d$ and $\nu_d$, the desired angle and the desired linear velocity, as
\begin{align}
  \label{eq:asymptotic-theta}
  \theta_d&=\arctan_2 (g_2,g_1)\\
  \label{eq:asymptotic-nu}
  \nu_d&=\norm{g},
\end{align}
where $\arctan_2$ is the four-quadrant inverse tangent function and $g=\bmat{g_1\; g_2}\transpose=-\gradx\varphi$.

Then, from \eqref{eq:unicycle-model}, condition \eqref{eq:asymptotic-xdot} is equivalent to
\begin{align}
  \theta=\theta_d, &&  \nu=\nu_d && \forall t>T.
\end{align}
\subsection{Tracking $\theta_d$}
In this section, we assume that $\nu$ is given, and focus on specifying $\omega$ to achieve~\eqref{eq:asymptotic-theta}. Define the signed angle error
\begin{equation}
\theta_e=\mod(\theta-\theta_d+\pi,2\pi)-\pi
\end{equation}
and the Lyapunov function $e=\frac{1}{2}\theta_e^2$. Note that $\theta_e \in [-\pi,\pi)$.

The derivative of $e$ is given by
\begin{equation}
  \label{eq:der-err}
  \dot{e}=\theta_e (\dot{\theta}-\dot{\theta}_d)=\theta_e (\omega-\dot{\theta}_d)
\end{equation}
where $\dtheta_d$ is given by\footnote{Recall that
  $\frac{\partial \arctan_2(x,y)}{\partial x}= -\frac{y}{x^2+y^2}$ and $\frac{\partial \arctan_2(x,y)}{\partial y}= \frac{x}{x^2+y^2}$}
\begin{multline}
  \label{eq:dthetad}
  \dot{\theta}_d
= \frac{1}{\norm{g}^2}\bmat{g_2\\-g_1}\transpose \dot{g}=\frac{1}{\norm{g}^2}g\transpose\bmat{0 & 1\\ -1 &0} \dot{g}\\
= \sum_{i=1}^N \ri\inverse \frac{g\transpose S H_i\dot{x}}{\norm{g}^2} 
=\sum_{i:a_i>0} \ri\inverse \abs{a_i} -\sum_{i:a_i<0} \ri\inverse \abs{a_i} 
\end{multline}
and where 
\begin{align}
  S=\bmat{0 & 1\\ -1 & 0},&&
  a_i= \frac{g\transpose S H_i\dot{x}}{\norm{g}^2},&&
  \dot{x}=\bmat{\cos(\theta)\\\sin(\theta)}\nu.
\end{align}
Note that the quantities $\{a_i\}$ are directly proportional to the specified linear velocity $\nu$.
If we could compute $\dtheta_d$ exactly, we could simply choose a control $\omega$ in \eqref{eq:der-err} yielding $\dot{e}<0$ and obtain convergence. However, $\dtheta_d$ requires the unknown range information $\{\ri\}$. As an alternative, we will show convergence only for the trajectories $x(t)$ of our closed-loop system that satisfy the condition 
\begin{equation}
  \label{eq:condition-rho}
  \ri\inverse\bigl(x(t)\bigr)<\rho
\end{equation}
for all $t$ greater than some $T_{x}$. As it will be clear later, the bound $\rho$ controls a trade-off between the region of convergence and the maximum control effort (a similar condition is also used in \cite{Aranda:AR13}). 

Sliding mode control design follows two phases. In the first phase, we choose a so-called \emph{switching manifold}, which is a sub-manifold of the state space containing the desired trajectories of the system. In our case, this is given by the condition $\theta_e(z)=0$, which specifies the gradient flow trajectories. The second phase is to specify control laws for the two cases $\theta_e(z)<0$ and $\theta_e(z)>0$ which bring the state of the system to the switching manifold in finite time. By switching between these two control laws, the state of the system will be maintained inside the switching manifold.

To construct the two control laws, we will exploit the particular structure of \eqref{eq:dthetad} and the bound \eqref{eq:condition-rho}. In practice, we define the angular velocity control input as
\begin{align}
\label{eq:control-omega}
  \omega&=
  \begin{cases}
    \rho\sum_{i:a_i>0} a_i+k_\theta=\rho\sum_{i:a_i>0} \abs{a_i}+k_\theta &\textrm{if } \theta_e\leq 0,\\
    \rho\sum_{i:a_i<0} a_i-k_\theta=-\rho\sum_{i:a_i<0} \abs{a_i}-k_\theta &\textrm{if } \theta_e>0,\\
  \end{cases}
\end{align}
where $k_\theta>0$ is arbitrary.

Then, by substituting \eqref{eq:control-omega} into \eqref{eq:der-err}, we obtain 
\begin{equation}
  \dot{e}=-\theta_e\bigl(\sum_{i:a_i>0} (\ri\inverse-\rho)\abs{a_i}-k_\theta - \sum_{i:a_i<0} \ri\inverse \abs{a_i}\bigr)
\end{equation}
if $\theta_e\leq 0$ and
\begin{equation}
    \dot{e}=\theta_e\bigl(\sum_{i:a_i<0} (\ri\inverse-\rho) \abs{a_i}-k_\theta - \sum_{i:a_i<0} \ri\inverse \abs{a_i}\bigr)
\end{equation}
if $\theta_e\geq 0$. In both cases we have
\begin{equation}
  \dot{e}=\abs{\theta_e}\dert\abs{\theta_e}\leq -\abs{\theta_e}k_\theta,
  \label{eq:der-err-negative}
\end{equation}
which implies that $\dert\abs{\theta_e}\leq -k_\theta$, and hence $\theta_e=0$ for all $t>T$, where $T=\frac{\abs{\theta_e}}{k_\theta}$  (see \cite{khalil:book02} for details). This will hold for all the trajectories of the closed loop system satisfying \eqref{eq:condition-rho}, and independently from the specific value of $\nu$ (which, however, needs to be known).
Note that, by increasing $\rho$, we increase the set of trajectories that satisfy condition \eqref{eq:condition-rho}, but we also increase the magnitude of the control in~\eqref{eq:control-omega}.

\subsection{Tracking $\nu_d$}
\label{sec:tracking-nu_d}
In the previous section, we have given a sliding mode controller which ensures convergence of $\theta$ to $\theta_d$ in finite time. 
Therefore, we can give a simple law such that $\nu=\nu_d$ when $\theta=\theta_d$. We propose the following inner product:
\begin{equation}
  \label{eq:control-nu}
  \nu=-\bmat{\cos(\theta)\;\;\sin(\theta)} g.  
\end{equation}
Note that, with this law, the value of $\nu$ becomes negative when the direction of the unicycle is opposite to the one indicated by the gradient field (\ie, the unicycle can be pushed backward).

\subsection{The closed-loop system}
We can summarize the results above in the following theorem, which represents our second major contribution.
\begin{theorem}\label{thm:convergence-unicycle}
  Given a cost $\varphi$ and an arbitrary parameter $\rho>0$, consider the closed loop system given by the unicycle model \eqref{eq:dynamical-model},~\eqref{eq:unicycle-model} with control inputs \eqref{eq:control-omega},~\eqref{eq:control-nu}. Let $z(t)$ be any trajectory of such system where the first two coordinates $x(t)$ satisfy $\ri\inverse(x(t))<\rho$. Then, $x(t)$ converges in finite time to a curve in the gradient flow of $\varphi$.
\end{theorem}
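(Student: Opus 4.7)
The plan is to reuse the two sliding-mode constructions of Sections 4.1 and 4.2 in sequence: the angular input \eqref{eq:control-omega} drives the heading error $\theta_e$ to zero in finite time, and once $\theta = \theta_d$ the linear input \eqref{eq:control-nu} reproduces the gradient field. Combining these two facts, the trajectory $x(t)$ satisfies the gradient-flow ODE \eqref{eq:feedback-system} after some finite time $T$, so it must coincide with an integral curve of $-\gradx\varphi$, which is exactly the claim.

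The first step invokes the derivation of Section 4.1 essentially verbatim. That derivation produces the bound $\dert\abs{\theta_e} \leq -k_\theta$ in \eqref{eq:der-err-negative} under the single assumption $\ri\inverse(x(t)) < \rho$, which is precisely the hypothesis of the theorem. This forces $\theta_e$ to reach zero in a finite time no larger than $\abs{\theta_e(0)}/k_\theta$, and the switching logic of \eqref{eq:control-omega} then maintains $\theta_e = 0$ for all subsequent times by the standard sliding-mode argument. It is important that the Section 4.1 derivation holds for any arbitrary measurable $\nu(t)$, so the fact that $\nu$ is itself generated by the closed-loop law \eqref{eq:control-nu}---and therefore depends on $\theta$ through the unicycle state---does not invalidate the estimate.

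The second step is a direct substitution. Once $\theta = \theta_d$, the heading vector satisfies $(\cos\theta, \sin\theta)\transpose = g/\norm{g}$ with $g = -\gradx\varphi$; plugging this into \eqref{eq:control-nu} and into the first two rows of \eqref{eq:unicycle-model} yields $\dot{x}(t) \propto -\gradx\varphi\bigl(x(t)\bigr)$, which matches \eqref{eq:feedback-system} up to the choice of the positive constant $k_\nu$. Consequently, for all $t > T$ the map $x(t)$ solves the same ODE as the gradient flow, and by uniqueness of solutions to \eqref{eq:feedback-system} it must lie on a single integral curve of $-\gradx\varphi$ passing through $x(T)$.

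The main obstacle lies less in the mechanics of this decomposition than in the bookkeeping around the discontinuous controller. One must verify that the closed-loop system admits well-defined Filippov solutions, that the coupling between the discontinuous $\omega$ in \eqref{eq:control-omega} and the continuous $\nu$ in \eqref{eq:control-nu} does not compromise the finite-time sliding-mode estimate (this is what step 1 above already ensures, provided $\nu$ remains measurable), and that the range bound $\ri\inverse < \rho$ is respected throughout the interval on which the Section 4.1 argument is invoked, rather than only asymptotically---which is guaranteed directly by the theorem's hypothesis on the specific trajectory $x(t)$ under consideration.
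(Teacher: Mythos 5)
Your proposal is correct and follows essentially the same route as the paper: Theorem~\ref{thm:convergence-unicycle} is presented there as a summary of \S\ref{sec:sliding-mode-unicycle}, whose argument is precisely your two-step decomposition --- finite-time sliding-mode convergence of $\theta_e$ to zero under the bound $\ri\inverse<\rho$ (valid for any known $\nu$), followed by the observation that \eqref{eq:control-nu} reproduces $\nu_d$ once $\theta=\theta_d$, so that $x(t)$ thereafter solves the gradient-flow ODE. Your additional remarks on Filippov solutions and the measurability of $\nu$ make explicit some bookkeeping the paper leaves implicit, but do not alter the argument.
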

\medskip

If we the cost function $\varphi$ in \S\ref{sec:control-an-holonomic}, Theorem~\ref{thm:convergence-unicycle} together with Theorem~\ref{thm:convergence-integrator} provides a solution to the visual homing problem for the unicycle model with a region of convergence which can be made arbitrarily large by changing $\rho$. In the simulations, however, we will show that we can practically obtain global convergence with a moderate value for $\rho$.

\subsection{Chattering and input saturation}
\label{sec:chatt-input-satur}
A well-known drawback of sliding mode control is the presence of chattering, where, due to actuation delays (\eg, from a discrete-time implementation of the controller) the state of the system will not exactly fall into the sliding manifold, but will overshoot it, and the closed loop system ends up switching between the two sides at high frequency. Standard ways to reduce the effects of chattering include augmenting the sliding mode controller with integrators or approximating the switching law with a ``continuous'', high-gain version \cite{khalil:book02}. For our particular application, however, we need also to be able to track the landmarks in the images from the camera as the robot moves. If the field of view of the camera is restricted, this might become impossible if the unicycle turns too fast. In addition, there might be physical constraints on the maximum speed at which the unicycle can move. Therefore, our control law should take into account constraints of the kind $\abs{\nu}\leq \numax$ and $\abs{\omega}\leq\omegamax$. Fortunately, our control laws are directly proportional to the magnitude of the gradient, which tends asymptotically to zero. Therefore, at least in the limit, our control naturally satisfies these constraints. Potential problems can only arise in the transient phase. We propose the following substitution:
\begin{equation}
  \bmat{\nu\\\omega}\gets  \bmat{\nu\\\omega}\min\{1,\abs{\frac{\numax}{\nu}},\abs{\frac{\omegamax}{\omega}}\}.
\end{equation}
Intuitively, this amounts to, when necessary, reduce the speed at which the unicycle tracks the gradient flow lines. Notice that, since the constraints tends to be satisfied naturally as the robot approaches the destination, this substitution does not change the asymptotic behavior of the controller.
This substitution has also the benefit of lessening the effects of chattering (the state cannot overshoot the sliding manifold too much).

\section{SENSITIVITY ANALYSIS}
\label{sec:sensitivity-analysis}
Until now, we have considered the ideal case where bearing directions at the home location, $\{\betagi\}$, are perfectly known. In practice, these bearings are measured, and might contain noise. Following our optimization framework, this results in a deviation of the minimizer of $\varphi$ from the ideal location $x_g$. Our goal is to relate the magnitude of this deviation and with the magnitude of the noise in $\{\betagi\}$. This will be used in \S\ref{sec:simulations} to build heat maps indicating what locations are ``harder'' to home into for a given set of landmarks.

To start our analysis, let $\{\betagi^\ast\}$ be a set of desired bearing vectors and denote as $x^\ast$ a minimizer of $\varphi(x,\betagi^\ast)$ (note that we made explicit the dependency of $\varphi$ on the desired bearings). From Lemma~\ref{lemma:global-minimum}, we know that when $\{\betagi^\ast\}=\{\betagi\}$, then $x^\ast=x_g$. Formally, our goal is to quantify the variance of the perturbation in $x^\ast$ when $\{\betagi^\ast\}$ is perturbed around $\{\betagi\}$.
Since $x^\ast$ is a minimizer, we have
\begin{equation}
  \label{eq:first-order-condition}
  \gradx\varphi(x^\ast,\{\betagi^\ast\})=0.  
\end{equation}
Using a first-order Taylor expansion of \eqref{eq:first-order-condition}, 
 we obtain
\begin{equation}
  \label{eq:der-first-order-condition}
  \Dxgradx\varphi(x^\ast,\betagi^\ast) v_x+\sum_{i=1}^N \Dbetagradx\varphi(x^\ast,\betagi^\ast) v_{\betagi}=0,
\end{equation}
where $v_x$ and $\{v_{\betagi}\}$ denote the perturbation in $x^\ast$ and $\{\betagi^\ast\}$, respectively. Note that, since $\betagi^\ast$ is constrained to have norm one, $v_{\betagi}$ will be always orthogonal to $\betagi^\ast$. We model each vector $v_{\betagi}$ as a i.i.d., zero-mean random variable with covariance matrix $E[v_{\betagi}v_{\betagi}\transpose]=\sigma_\beta^2 (I-\betagi^\ast\betagi^\ast{}\transpose)$, where $\sigma_\beta$ represents the noise level in the perturbation (notice that this matrix is singular in the direction $\betagi^\ast$, due to the orthogonality of $v_{\betagi}$ and $\betagi^\ast$). 
 Then, we can evaluate \eqref{eq:der-first-order-condition} at $(x_g,\betagi)$, and obtain an expression for the covariance matrix of $v_x$, that is
\begin{equation}
  E[v_xv_x\transpose]=  \sigma_\beta^2J_x\inverse \bigl(\sum_{i=1}^N J_{\betai}J_{\betai}\transpose \bigr)J_x^{-T},
\end{equation} 
where $J_x=\Dxgradx\varphi$ and $J_{\betai}=\Dbetagradx\varphi$. In practice, an expression for $J_x$ is given by Proposition~\ref{prop:grad-and-Dgrad}, while, for $J_{\betai}$, we have the following (the proof is similar to the one of Proposition~\ref{prop:grad-and-Dgrad}%
\ifthenelse{\boolean{isconference}}{}{%
, see \S\ref{sec:proof-prop-Dbetagrad} of the Appendix}).
\begin{proposition}
  \label{prop:Dbetagrad}
  The differential $\Dbetagradx$ is given by
  \begin{equation}
    \label{eq:Dbetagrad}
    \Dbetagradx=-\df(c_i)I-\ddf(c_i)(I-\betai\betai\transpose)\betagi\betai\transpose.
  \end{equation}
\end{proposition}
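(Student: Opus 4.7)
The plan is to start from the closed form of $\gradx\varphii$ already established in Proposition~\ref{prop:grad-and-Dgrad}, namely
\[
\gradx\varphii = -f(c_i)\,\betai - \df(c_i)\,P_{\betai}\,\betagi,
\]
and differentiate this expression with respect to $\betagi$, treating the location $x$ (and therefore $\betai$ and the projector $P_{\betai}$) as fixed. The only scalar that depends on $\betagi$ is $c_i = \betagi\transpose\betai$, for which $\D_{\betagi}c_i = \betai\transpose$.

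First I would eliminate the projector by writing $P_{\betai}\betagi = \betagi - c_i\betai$, so that the gradient becomes
\[
\gradx\varphii = \bigl(\df(c_i)\,c_i - f(c_i)\bigr)\betai \;-\; \df(c_i)\,\betagi,
\]
which splits the task into differentiating a scalar times the fixed vector $\betai$, and differentiating $-\df(c_i)\betagi$. For the first piece, the chain rule gives $\D_{\betagi}\bigl(\df(c_i)c_i - f(c_i)\bigr) = \bigl(\ddf(c_i)c_i + \df(c_i) - \df(c_i)\bigr)\betai\transpose = \ddf(c_i)\,c_i\,\betai\transpose$, contributing $\ddf(c_i)\,c_i\,\betai\betai\transpose$ to the Jacobian. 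For the second piece, the product rule yields $-\df(c_i)\,I - \ddf(c_i)\,\betagi\,\betai\transpose$.

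Summing and collecting the two $\ddf$-terms gives
\[
\Dbetagradx\varphii = -\df(c_i)\,I - \ddf(c_i)\bigl(\betagi - c_i\betai\bigr)\betai\transpose,
\]
and recognizing $\betagi - c_i\betai = (I-\betai\betai\transpose)\betagi$ yields exactly the stated formula. Finally, since $\varphi = \sum_i \varphi_i$ and each term $\varphi_i$ depends on only its own desired bearing $\betagi$, the full differential $\Dbetagradx\varphi$ agrees with $\Dbetagradx\varphii$ in the $\betagi$-block.

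There is no genuine obstacle; the argument is pure matrix calculus, in the same spirit as the derivation of \eqref{eq:Dgradient-cost} in Proposition~\ref{prop:grad-and-Dgrad}. The only place where a slip is easy is the sign bookkeeping in the chain rule for $\df(c_i)\,c_i - f(c_i)$ (where the two $\df$ contributions cancel, leaving only the $\ddf$-term), and the re-assembly of $\betagi - c_i\betai$ into $P_{\betai}\betagi$ at the end to match the stated form.
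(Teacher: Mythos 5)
Your proposal is correct and follows essentially the same route as the paper: both differentiate the closed-form gradient with respect to $\betagi$ holding $x$ (hence $\betai$) fixed, using $\D_{\betagi}c_i=\betai\transpose$, and both exploit the cancellation of the two $\df$ contributions before reassembling $\betagi-c_i\betai=(I-\betai\betai\transpose)\betagi$. The only cosmetic difference is that you start from the rearranged form \eqref{eq:cost-grad-alternate} while the paper differentiates the projector form $-f(c_i)\betai-\df(c_i)P_{\betai}\betagi$ along a curve $\tbetagi$; the algebra is identical.
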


In order to obtain a more compact representation of the effect of noise, we will condense the covariance matrix into a single number, the variance, using $\sigma_x^2=\trace\bigl(E[v_xv_x\transpose]\bigr)$.

The ratio $\frac{\sigma_x^2}{\sigma_y^2}$ gives a measure of the robustness of the minimizer of $\varphi$ with respect to the noise in the bearings. 

\section{SIMULATIONS}
\label{sec:simulations}

In this section we illustrate the behavior of the proposed control laws through simulations. We generate a scenario where we place $10$ landmarks at random in a square area 
 and a target location in the lower left quadrant. We first test the model considered in \S\ref{sec:control-an-holonomic}. We initialize the system from $9$ different locations uniformly distributed in the square area, and record the trajectory obtained by integrating the closed-loop system. Figure~\ref{fig:trajectoriesIntegrator} show the results obtained using the functions \eqref{eq:f-cosine}--\eqref{eq:f-angleSq}, together with the level curves and gradient field of the cost function $\varphi$. As one can see, the system converges asymptotically to the home location $x_g$ from all the initial conditions. The trajectories obtained using \eqref{eq:f-cosine} appear to be slightly more direct than those obtained with~\eqref{eq:f-angleSq}, however the differences are quite minor.

 In the second set of experiments we use the same conditions (landmarks, starting and home locations; cost functions), but this time we test the model considered in \S\ref{sec:sliding-mode-unicycle}. We test four initial headings for each starting location. The results are shown in Figure~\ref{fig:trajectoriesUnicycle}. As before, the model converges asymptotically to the home location from any initial condition. When the initial direction of the cart is against the negative gradient, the controller automatically performs a maneuver where the unicycle first goes backward while rotating, and then switches direction to move toward the destination. Notice also how each trajectory converges to a gradient flow line (\ie, the tangent of the trajectory approaches the gradient).

\begin{figure}
  \centering
  \subfloat[Cosine, equation~\eqref{eq:f-cosine}]{\includegraphics[clip,trim=0 0 0 1mm]{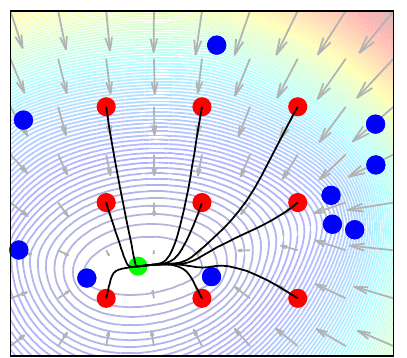}}
  \subfloat[Angle squared, equation~\eqref{eq:f-angleSq}]{\includegraphics[clip,trim=0 0 0 1mm]{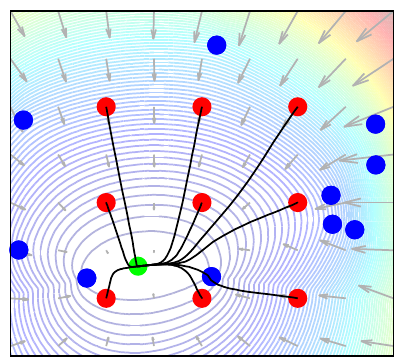}}
  \caption{Trajectories for the integrator model of \S\ref{sec:control-an-holonomic} with our controller and different choices for $f$. We also superimpose the level sets and gradient field of the cost $\varphi$.}
  \label{fig:trajectoriesIntegrator}
\end{figure}
\begin{figure}
  \centering
  \subfloat[Cosine, equation~\eqref{eq:f-cosine}]{\includegraphics[clip,trim=0 0 0 1mm]{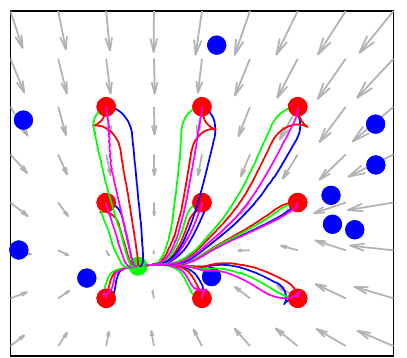}}
  \subfloat[Angle squared, equation~\eqref{eq:f-angleSq}]{\includegraphics[clip,trim=0 0 0 1mm]{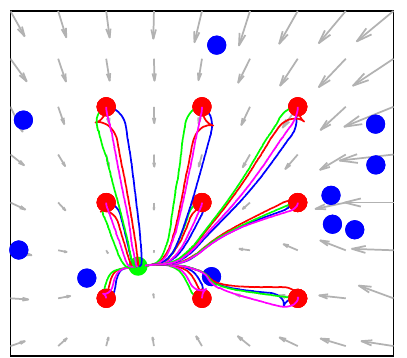}}
  \caption{Trajectories for the unicycle model of \S\ref{sec:sliding-mode-unicycle} with our controller. The four different colors represent the four initial heading directions: north (red), east (blue), south (pink), west (green). The cost used is the same as in Figure~\ref{fig:trajectoriesIntegrator}.}
  \label{fig:trajectoriesUnicycle}
\end{figure}
\begin{figure}
  \centering
\includegraphics{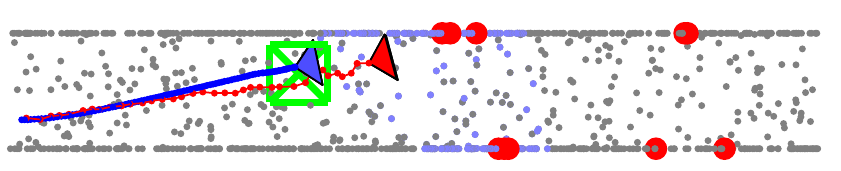}
  \caption{Simulation of a complete system. Grey and blue dots: points used to generate features for the visual odometry. Red dots: landmarks. Green box: home location. Blue triangle and line: actual pose and trajectory of the unicycle. Red triangle and line: pose and trajectory estimated by visual odometry.}
  \label{fig:completeSystem}
\end{figure}

In the third experiment, we simulate a complete system in a corridor-like environment, as shown in Figure~\ref{fig:completeSystem}. We use LIBVISO2 \cite{Geiger:IV11} for visual odometry to compute the global compass direction. The grey and light blue points are used to generate features for visual odometry (the light blue points are those visible by the camera), while the red points represent the landmarks. The home location is given by the green box. The blue triangle and line represent the actual final pose and the actual trajectory of the unicycle. The red triangle and line represent those estimated by LIBVISO2. Notice how, even if the estimation of the translation from visual data fails, our controller can still drive the unicycle to the correct location.

Finally, we demonstrate the use of the sensitivity analysis from \S\ref{sec:simulations}. In Figure~\ref{fig:heatmaps} we show the heat maps of $\log\left(\frac{\sigma_x^2}{\sigma_y^2}\right)$ corresponding to the configuration of landmarks of Figures~\ref{fig:trajectoriesIntegrator}--\ref{fig:trajectoriesUnicycle}, together with two other simple configurations. In addition, we show how the heat maps change when the coordinate of the bearings are scaled along one of the axes. The reshaping function used is \eqref{eq:f-cosine}.
Generally speaking, it seems to be hard to qualitatively predict the aspect of the heat map by simply looking at the bearing configuration. However, we can notice some trends. First and foremost, as one would expect, the stability of the homing position quickly decreases outside a neighborhood of the landmarks. We can notice that the homing positions evenly surrounded by the landmarks are among the most stable, as one would expect. However, a little bit surprisingly, close to each landmark there are areas where the stability is significantly increased or decreased. Finally, we can notice that, as the points tend to be collinear, the stability of home locations along the same line decreases.

\begin{figure}
  \newcommand{\figspace}{1cm}
  \centering
  \subfloat[Configuration from Figures~\ref{fig:trajectoriesIntegrator}--\ref{fig:trajectoriesUnicycle}]{\hspace{\figspace}\includegraphics[clip,trim=0 0 0 1mm]{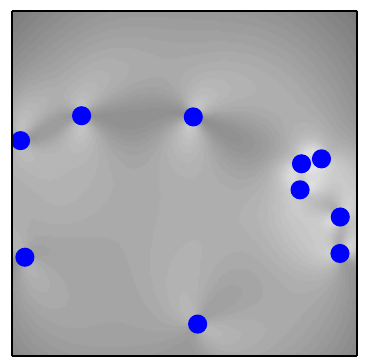}\includegraphics[clip,trim=0 0 0 1mm]{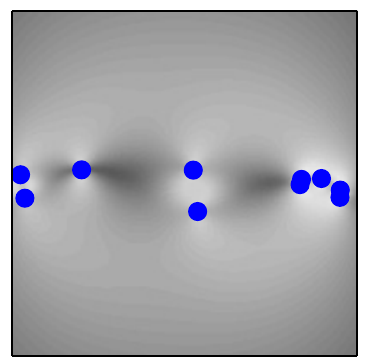}\hspace{\figspace}}\\
  \subfloat[Square configuration]{\hspace{\figspace}\includegraphics[clip,trim=0 0 0 1mm]{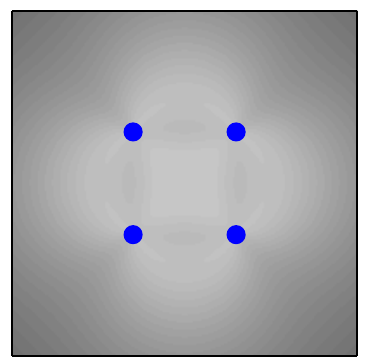}\includegraphics[clip,trim=0 0 0 1mm]{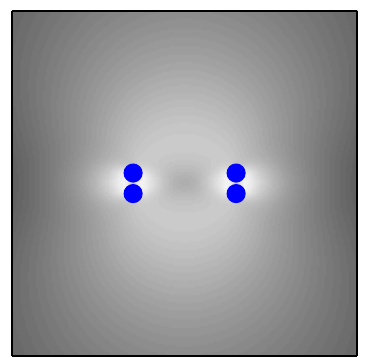}\hspace{\figspace}}\\
  \subfloat[Triangle configuration]{\hspace{\figspace}\includegraphics[clip,trim=0 0 0 1mm]{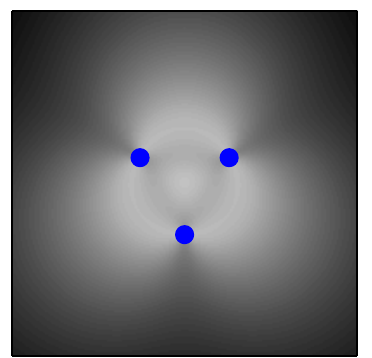}\includegraphics[clip,trim=0 0 0 1mm]{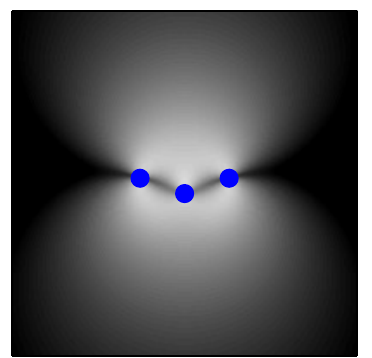}\hspace{\figspace}}
  \caption{Examples of heat maps of $\log\left(\frac{\sigma_x^2}{\sigma_y^2}\right)$ for various bearing configurations. White: low sensitivity. Black: high sensitivity.}
  \label{fig:heatmaps}
\end{figure}

\section{CONCLUSIONS}
We have presented a new approach to the visual homing problem using bearing-only measurements, where the control law is given by the gradient of a cost function and global convergence is guaranteed. We also developed a sliding mode controller for a unicycle model which follows the gradient field line with limited information (lower and upper bounds on the differential of the gradient). When coupled with the previous control law, this provides a natural solution to the bearing-only visual homing problem for the unicycle model. Our framework allows us also to compute heat maps indicating the relative sensitivity of different homing location with respect to noise in the home bearing directions.

 In future work, we plan to study more throughly the effect of the reshaping function $f$ in $\varphi$, especially for question regarding robustness to outliers. We will also investigate a compass-free extension, where the relative rotation between the current and goal reference frames is estimated through epipolar geometry and it is used in the cost $\varphi$. Finally, it would be interesting to see whether the same function $\varphi$ could be used in the framework of \cite{Cowan:TRA02,Koditschek:book89} allowing the control of a unicycle with a Lagrangian model.

\bibliographystyle{IEEEtran}
\newcommand{\bibpath}{.}
\bibliography{%
\bibpath/control,%
\bibpath/IEEEfull,%
\bibpath/IEEEConfFull,%
\bibpath/OtherFull,%
\bibpath/visualHoming,%
\bibpath/visualOdometry,%
\bibpath/tron}

\appendix
\subsection{Proof of Proposition~\ref{prop:grad-and-Dgrad}}
\label{sec:proof-gradient}
We first compute the gradient by evaluating the function along a line $\tx$ and using the definition~\eqref{eq:definition-differential}. We will need the following derivatives (recall that $\dtx=v$).
\begin{align}
\dtri&=\dert\norm{x_i-\tx}=\frac{1}{2\norm{x_i-\tx}}\dert\norm{x_i-\tx}^2\nonumber\\
&=-\tri\inverse(x_i-\tx) \transpose v= -v\transpose \tbetai,\\
\dert\tri\inverse&=-\tri^{-2}\dtri=\tri^{-2}v\transpose \tbetai,\\
\dtbetai&=\dert \tri\inverse (x_i-\tx)
=-\tri\inverse P_{\betai}(I-\tbetai\tbetai\transpose) v,\\
\dtci&=\betagi\transpose \dtbetai
=-\tri\inverse v\transpose P_{\betai}\betagi. \label{eq:directional-derivative-c}
\end{align}
Hence
\begin{equation}
  \dtvarphii\!=\!f(\tci)\dtri+\df(\tci)\ri\dtci
  \!=\!v\transpose\!\bigl(-f(\tci) \tbetai - \df(\tci)P_{\betai}\betagi \bigr),
\end{equation}
from which \eqref{eq:gradient-cost} follows.

By reorganizing terms, another expression for $\gradx\varphii$ is
\begin{equation}
  \gradx\varphii
= (\df(\ci)\ci-f(c_i)) \betai - \df(c_i) \betagi.\label{eq:cost-grad-alternate}
\end{equation}

For $\Dxgradx\varphi$, we will again use definition \eqref{eq:definition-differential}.
\begin{multline}
\dert \gradx \varphi_i = (\ddf(\tci)\tci+\df(\tci)-\df(\tci))\tbetai\dtci\\+(\df(\tci)\tci-f(\tci))\dtbetai-\ddf(\tci)\betagi\dtci\\
=-\tri\inverse\bigl(\ddf(\tci) (\tci\tbetai-\betagi)\betagi\transpose+(\df(\tci)\tci-f(\tci))I\bigr) P_{\betai} v
\end{multline}

\subsection{Proof of Lemma~\ref{lemma:derivative-lines}}
\label{sec:proof-derivative-lines}

\myparagraph{Step 1) Change of coordinates}
 To make the derivations simpler, we make a few coordinate transformations which do not change the value of $\tvarphii$. First, the values of $\tri$, $\tbetai$ and $\tci$ are invariant to a global translation. 
 Hence, w.l.o.g, we assume $x_i=0$, \ie, we center our coordinates on the $i$-th landmark. Then, we make a global change of scale such that $\norm{x_g}=1$ (this is equivalent to multiplying $\varphi$ by a positive constant, which can be simply undone at the end). Finally, $\ri$ and $\ci$ are invariant to global rotations. 
Hence, we choose a rotation which aligns $x_g$ with the first axis and such that $x_g$ and $d$ span the subspace given by the first and second coordinates. We can then restrict our attention to only the first two coordinates (2-D case) and we assume $x_g=\bmat{1\;\;0}\transpose$.

\myparagraph{Step 2) Explicit expressions}
Let $v=\bmat{v_1\;v_2}\transpose$. Under the previous change of coordinates, we have the following:
\begin{align}
  \tx&=\bmat{1+tv_1\;tv_2}\transpose,\\
  \tri&=\sqrt{(1+tv_1)^2+t^2v_2^2},\\
  \tbetai&=-\frac{1}{\sqrt{(1+tv_1)^2+t^2v_2^2}}\bmat{1+tv_1\\tv_2},\\
  \betagi&=-\bmat{1\;0}\transpose,\\
  \tci&=\frac{1+tv_1}{\sqrt{(1+tv_1)^2+t^2v_2^2}},\\
  \dtci&=-\frac{tv_2^2}{\sqrt{(1+tv_1)^2+t^2v_2^2}^3}  \begin{cases}
    =0 & \textrm{for } t=0,\\
    <0 & \textrm{for } t>0,\\
  \end{cases}
\end{align}
\begin{align}
  v\transpose\betagi&=-v_1 \label{eq:vbetagi},\\
  v\transpose\tbetai&=-\frac{v_1(1+tv_1)+tv^2_2}{\sqrt{(1+tv_1)^2+t^2v_2^2}}, \label{eq:vbetai}\\
  v\transpose &P_{\betai}\betagi=-\tri\dtci\nonumber\\&=\frac{t v_2^2}{(1+tv_1)^2+t^2v_2^2}  \begin{cases}
    =0 & \textrm{for } t=0,\\
    >0 & \textrm{for } t>0.\\
  \end{cases} \label{eq:vPbetaibetagi}
\end{align}
Note that \eqref{eq:vPbetaibetagi} is follows from \eqref{eq:directional-derivative-c}.
Also, from \eqref{eq:vbetai} we have
\begin{align}
    v\transpose\tbetai(0)&=-v_1,\\
    \dert v\transpose\tbetai &= -\frac{v_2^2}{\sqrt{(1+tv_1)^2+t^2v_2^2}^3}<0,
\end{align}
which implies that $v\transpose\tbetai$ is monotonically decreasing for $t\geq 0$, with maximum $-v_1$ at $t=0$. Hence, from \eqref{eq:vbetagi},
\begin{equation}
    v\transpose\tbetai\leq v\transpose\betagi. \label{eq:vtbetai-vbetagi}
\end{equation}

\myparagraph{Step 3) Proof of the bounds}
 In order to avoid some negative signs, we will work with $-\dtvarphi$.
From~\eqref{eq:gradient-cost} and \eqref{eq:cost-grad-alternate}:
  \begin{align}
  -\dtvarphii&
  =f(\tci) v\transpose\tbetai + \df(\tci) v\transpose(I-\tbetai\tbetai\transpose) \betagi \label{eq:negative-directional-derivative}\\
&=\bigl(f(\tci) -\df(\tci)\tci\bigr) v\transpose\tbetai + \df(\tci) v\transpose\betagi. \label{eq:negative-directional-derivative-2}
  \end{align}

First, consider the case where $t=0$. Since $v\transpose\tbetai(0)=v\transpose\betagi$, $\tci(0)=1$, $f\bigl(\tci(0)\bigr)=0$, and $-\dtvarphii$ reduces to
\begin{equation}
  -\dtvarphii(0)=-\df\bigl(\tci(0)\bigr) v\transpose\betagi +\df\bigl(\tci(0)\bigr) v\transpose\betagi=0.
\end{equation}
This difference is well defined, since $\df(1)$ is finite (see \eqref{eq:property-f-firstderivative}).

Now, consider the case $t>0$, and assume that $v$ is not parallel to $\betagi$. We have two cases depending on $\sign(v\transpose\tbetai)$.

1) Assume $v\transpose\tbetai<0$. Intuitively, this condition indicates that $\tx$ moves away from $x_i$ as $t$ increases. Combining the last assumption with the non-negativity of $f$, and property \eqref{eq:property-f-firstderivative} with \eqref{eq:vPbetaibetagi}, we see that \eqref{eq:negative-directional-derivative} is negative.

2) Assume $v\transpose\tbetai>0$. Intuitively, this condition indicates that $\tx$ gets closer to $x_i$ as $t$ increases. 
Note that $v\transpose\tbetai>0$ implies, from \eqref{eq:vtbetai-vbetagi}, that $v\transpose\betagi>0$. We will consider two further subcases depending on $\sign\bigl(f(\tci) -\df(\tci)\tci\bigr)$. 

2a) Assume $\bigl(f(\tci) -\df(\tci)\tci\bigr)<0$. Combining this with the positivity of $v\transpose\tbetai$, $v\transpose\betagi$ and with property \eqref{eq:property-f-firstderivative}, we have that \eqref{eq:negative-directional-derivative-2} is negative.

2b) Assume $\bigl(f(\tci) -\df(\tci)\tci\bigr)\geq0$. Then, using \eqref{eq:vtbetai-vbetagi} in \eqref{eq:negative-directional-derivative-2}, we have
  \begin{equation}
    -\dtvarphii\leq \bigl(f(\tci) +(1-\tci)\df(\tci)\bigr) v\transpose\betagi.
  \end{equation}
From property \eqref{eq:property-f-derivativemix}, this implies $-\dtvarphii<0$.

Now we pass to the case $v=a\betagi$, $a>0$. Let $t_0=\frac{\norm{x_i-x_g}}{\norm{v}}$. We have that $\tx(t_0)=0=x_i$, \ie, the line passes through the landmark. For $t\in[0,t_0)$, we have $\tbetai\equiv \betagi$, $\tci\equiv 1$ and $f(\tci)\equiv0$, which implies $-\dtvarphii\equiv0$ (similarly to the previous case for $t=0$). For $t>t_0$ we have $\tbetai\equiv -\betagi$, and $\tci\equiv -1$, which implies 
\begin{equation}
  -\dtvarphii\equiv -(f(-1)+\df(-1))+\df(-1)\equiv -f(-1)<0.
\end{equation}
Finally, we consider the case $v=a\betagi$, $a<0$. Intuitively, in this case $\tx$ moves exactly opposite $x_i$ as $t$ increases. Similarly to the above, we have $\tbetai\equiv \betagi$, $\tci\equiv 1$ and $f(\tci)\equiv0$, which imply $\dtvarphii\equiv0$.

\ifthenelse{\boolean{isconference}}{\pagebreak}{

\subsection{Proof of Proposition~\ref{prop:radially-unbounded}}
\label{sec:proof-radially-unbounded}
In order to show the claim, we will use one of the definitions of radially unbounded function \cite{khalil:book02}, which requires any level set of $\varphi$, defined as
\begin{equation}
  \Sset(\bar{\varphi})=\{x\in\real{d}:\varphi(x)\leq\bar{\varphi}\},  
\end{equation}
 to be compact. Our strategy is to show that any of these sets is contained in a ball centered at $x_g$ with finite radius $\bar{t}<\infty$, which we denote as $B(x_g,\bar{t})$.
We describe this ball in polar coordinates and use a lower bound of the function $\varphi$ evaluated along lines of the form $\tx(t)=x_g+tv$. For this section, we assume $\norm{v}=1$, so that we have the equivalence $t=\norm{x_g-\tx(t)}$. We need the following preliminary definitions
\begin{align}
  \rgi&=r_i(x_g),\\
  \rgmax&=\max_i\{r_{gi}\},\\
  \ciinf&=\lim_{t\to\infty} y_i\transpose y_g.
\end{align}
Notice that
\begin{multline}
  \ciinf=\lim_{t\to\infty}\betagi\transpose\frac{x_i-x_g-tv}{\norm{x_i-x_g-tv}}=\betagi\transpose v,
\end{multline}
which, together with \eqref{eq:vtbetai-vbetagi} and \eqref{eq:ordering}, implies
\begin{equation}
  \label{eq:ineq-fciinf}
  f\bigl(\tci(t)\bigr) \geq f(\ciinf).
\end{equation}
Also, from the triangular inequality applied to the triangle $\bigl(x_i,x_g,\tx(t)\bigr)$, we have
\begin{equation}
  \label{eq:ineq-tri}
  \tri(t)\geq t-\rgi.  
\end{equation}
Without loss of generality, in this proof we will only consider values of $t$ such that $t>\rgmax$. As it will be clear by the end of the proof, this restriction excludes level sets $\Sset(\bar{\varphi})$ where $\bar{\varphi}$ is too small and that do not contain the ball centered at $x_g$ with radius $\rgmax$, \ie, $B(x_g,\rgmax)\nsubseteq \Sset(\bar{\varphi})$. However, since level sets have the property $\Sset(\bar{\varphi}_1\subset \Sset(\bar{\varphi}_2)$ when $\bar{\varphi}_1)< \bar{\varphi}_2$, if we can show the claim for a large $\bar{\varphi}_2$, then the claim is also automatically true for all $\bar{\varphi}_1< \bar{\varphi}_2$.

Going back to the proof, by combining inequalities \eqref{eq:ineq-fciinf}, \eqref{eq:ineq-tri} with the definition of $\varphi$, we can derive the following lower bound:
\begin{multline}
  \tvarphi=\sum_{i=1}^N\tri f(\tci)\geq \sum_{i=1}^N (t-\rgi)f(\ciinf)\\
\geq (t-\rgmax)\sum_{i=1}^N f(\ciinf)\geq (t-\rgmax)\max_i f(\ciinf)\\
 \geq (t-\rgmax)\bar{f}\defeq \varphi_{lb},
\end{multline}
where $\bar{f}=\min_v\max_i f(v\transpose\betagi)$. Notice that
\begin{equation}
    \max_i f(v\transpose\betagi)=0 \iff f(v\transpose\betagi)=0 \;\forall i\iff v=\betagi\;\forall i.
\end{equation}
Hence, under the assumption of non-degenerate bearings, $\bar{f}$ is strictly positive, \ie, $\bar{f}>0$.

Now, for a given value $\bar{\varphi}$, we define the radius of the ball, $\bar{t}$, such that $\varphi_{lb}(\bar{t})=\bar{\varphi}$, \ie,
\begin{equation}
  \bar{t}=\frac{\bar{\varphi}}{\bar{f}}+\rgmax  
\end{equation}

We now show that $\Sset(\bar{\varphi})\subseteq B(x_g,\bar{t})$. From the proof of Proposition~\ref{prop:unique-critical-point}, we know that $\tvarphi$ is always monotonically increasing. Hence, there exist a unique $t^\ast$ such that $\tvarphi(t^\ast)=\bar{\varphi}$. This means that $\Sset(\bar{\varphi})$ is always star-shaped around $x_g$, and $t^\ast$ denotes the coordinate of the boundary for a given direction $v$. Since $\tvarphi(t^\ast)=\bar{\varphi}=\varphi_{lb}(\bar{t})$, and $\varphi(t^\ast)\geq\varphi_{lb}(t^\ast)$, it follows (by substituting the expression for $\varphi_{lb}$) that $t^\ast\leq \bar{t}$. It follows that, if we start from the origin $x_g$ along any line with direction $v$, we will never encounter the boundary of $\Sset(\bar{\varphi})$ after the boundary of $B(x_g,\bar{t})$. Hence, $\Sset(\bar{\varphi})\subseteq B(x_g,\bar{t})$ for any sufficiently large value of $\bar{\varphi}$. From the considerations above, this implies that $\varphi$ is radially unbounded and this completes the proof.

\subsection{Proof of Proposition~\ref{prop:functions}}
\label{sec:proof-functions}
We will compute the derivatives of each function and verify the requested properties.
  \begin{itemize}
  \item Proof for \eqref{eq:f-cosine} We have
    \begin{align}
      \df(c)&=-1,\\
      \ddf(c)&=0,\\
      f(c)+(1-c)\df(c)&=0.
    \end{align}
    Hence \eqref{eq:f-cosine} satisfies Assumption~\ref{ass:reshaping}.
  \item Proof for \eqref{eq:f-angleSq}: 
    \begin{align}
  \df(c)&=\begin{cases}-1 & \textrm{for } c=1\\-\frac{\arccos(c)}{\sqrt{1 - c^2}}& \textrm{for } -1\leq c <1\end{cases}\\
  \ddf(c)&=\frac{1}{1- c^2} - \frac{c\arccos(c)}{\sqrt{1 - c^2}^3}
\end{align}
Note that the value for $\df(c)$ at $c=1$ is fixed by left-continuity, as we will now show.  Recall that 
\begin{align}
  \frac{\de \arccos(c)}{\de c}&=-\frac{1}{\sqrt{1 - c^2}}\\
  \frac{\de \sqrt{1-c^2}}{\de c}&=-\frac{c}{\sqrt{1-c^2}}.
\end{align}
By applying L'H\^opital's rule, we have
\begin{equation}
  \lim_{c\to 1^-} -\frac{\arccos(c)}{\sqrt{1 - c^2}} = \lim_{c\to 1^-} -\frac{-\frac{1}{\sqrt{1 - c^2}}}{-\frac{c}{\sqrt{1-c^2}}}=\lim_{c\to 1^-} -\frac{1}{c} = -1
\end{equation}
 
Showing property \eqref{eq:property-f-derivativemix} for \eqref{eq:f-angleSq} requires some work.
\begin{multline}
f(c)+(1-c)\df(c) =\frac{1}{2}\arccos(c)^2 -\frac{(1-c)\arccos(c)}{\sqrt{1 - c^2}}\\
=\frac{1}{2}\arccos(c)\left(\arccos(c) -\frac{2(1-c)}{\sqrt{1 - c^2}}\right)
\end{multline}
Note that $\arccos(c)\geq 0$. Define
\begin{equation}
    b_s(c)=\arccos(c) -\frac{2(1-c)}{\sqrt{1 - c^2}}.
\end{equation}
Note that, strictly speaking, $b_s(c)$ is not defined for $c=1$. However, passing to the limit and recalling that $(1-c^2)=(1-c)(1+c)$, we can make the following simplification:
\begin{multline}
  \lim_{c\to 1^-} b_c(c)= \lim_{c\to 1^-} \arccos(c) -\frac{2(1-c)}{\sqrt{(1 - c)(1+c)}}\\=\lim_{c\to 1^-} \frac{2\sqrt{1-c}}{\sqrt{(1+c)}}=0
\end{multline}
Notice that this simplification would not be valid outside the limit, because it would be a division by zero. Taking the derivative of $b_s$, we have
\begin{equation}
  \frac{\de b_s(c)}{\de c}= \frac{(1-c)^2}{(1 - c^2)^\frac{3}{2}}\geq 0.
\end{equation}
This means that $b_s$ is a monotonically increasing function which approaches zero for $c=1$. Hence, $b_s$ and, consequently, $f(c)+(1-c)\df(c)$ are non-positive on $[-1, 1]$, as can it be also verified by simple visual inspection of a plot of the function. This shows the third required property for $f$.
  \end{itemize}
\subsection{Proof of Proposition~\ref{prop:Dbetagrad}}
\label{sec:proof-prop-Dbetagrad}
Similarly to what we did in \S\ref{sec:proof-gradient}, we use the definition~\eqref{eq:definition-differential}. However, this time, $x$ is fixed, and we define $\tbetagi=y_{gi0}+tv$ analogously to $\tx$.
\begin{multline}
\dert\gradx\varphi=-\df(\tci)\dtci\betai-\ddf(\tci)\dtci(I-\betai\betai)\tbetagi\\-\df(\tci)(I-\betai\betai\transpose)\dtbetagi\\
=-\df(\tci)\betai\betai\transpose\dtbetagi-\ddf(\tci)(I-\betai\betai)\tbetagi \betai\transpose\dtbetagi\\-\df(\tci)(I-\betai\betai\transpose)\dtbetagi\\
=-\bigl(\df(\tci)I+\ddf(\tci)(I-\betai\betai)\tbetagi\betai\bigr)\dtbetagi,
\end{multline}
from which~\eqref{eq:Dbetagrad} follows.


}

\end{document}